\def\<{\langle}
\def\>{\rangle}
\definecolor{vert_10}{RGB}{21,106,47}
\def\dom{\textnormal{Dom}\,}
\crefname{hypothesis}{Hypothesis}{Hypotheses}
\newtheorem{theo}{Theorem}
\newtheorem{defi}{Definition}
\newtheorem{rem}{Remark}
\newtheorem{expl}{Example}
\newtheorem{propo}{Proposition}
\newtheorem{lem}{Lemma}
\newtheorem{coro}{Corollary}
\def\dom{\mathop{\rm dom\,}}
\title{Weak and strong convergence of an inertial proximal
method for solving bilevel monotone equilibrium problems\thanks{Submitted to the editors DATE.
\funding{Research of A\"icha Balhag was supported by  the EIPHI Graduate School (contract ANR-17-EURE-0002)  and 
research of Michel Th\'era was supported by a public grant as part of the  Investissement d'avenir project, reference ANR-11-LABX-0056-LMH, LabEx LMH.}}}
\author{A. Balhag\thanks{Institut de Math\'ematiques de Bourgogne, UMR 5584 CNRS, Universit\'e Bourgogne Franche-Comt\'e, F-2100 Dijon, France   (\email{aichabalhag@gmail.com}).}
\and Z. Mazgouri \thanks{ORCID 0000-0003-0131-4741, National School of Applied Sciences, Sidi Mohamed Ben Abdellah University, Laboratory LAMA Mathematics,
	30000 Fez, Morocco (\email{zakariam511@gmail.com}).}
\and Michel Th\'era\thanks{ORCID 0000-0001-9022-6406, Laboratoire XLIM UMR-CNRS 7252, Universit\'e de Limoges, 87032 Limoges, France and Federation University Australia, Ballarat 3353, Australia \email{(michel.thera@unilim.fr}).
}}
\newcommand{\R}{\mathbb R}
\def\dom{\mathop{\rm dom\,}}
\def\argmin{\mathop{\rm argmin\,}}
\begin{document}
\maketitle
\begin{abstract}
 In this paper, we introduce an inertial proximal method for solving a bilevel problem involving two monotone equilibrium bifunctions in Hilbert spaces.
	Under suitable conditions and without any restrictive assumption on the trajectories, the weak and strong convergence of the sequence generated by the iterative method are established. 
	Two particular cases illustrating the proposed method are thereafter discussed with respect to hierarchical minimization problems and equilibrium problems under saddle point constraint.
	Furthermore, a numerical example is given to demonstrate the implementability of our algorithm. The algorithm and its convergence results improve and develop previous results in the field.  
\end{abstract}

\begin{keywords}
Bilevel Equilibrium problems; Monotone bifunctions; Proximal algorithm; Weak and strong convergence; Equilibrium Fitzpatrick transform.
\end{keywords}

% REQUIRED
\begin{AMS}
90C33, 49J40, 46N10, 65K15, 65K10
\end{AMS}

%%%%%%%%%%%%%%%%%%%%%%%%%%%%%%%%%%%%%%%%%
%%%%%%%%%%%%%%%%%%%%%%%%%%%%%%%%%%%%%%%%%%
Let $K$ be a nonempty closed and convex subset of a real Hilbert space $H$, and let $f:K\times K \rightarrow \mathbb{R}$ be a real-valued bifunction. %satisfying $f(x,x)=0$ for all $x\in K$.
%Given a real Hilbert space $H$ with inner product $\langle \cdot ,\cdot \rangle$ and induced norm $\|\cdot\|$, a nonempty closed convex set $K\subset H$ and a real-valued bifunction $f:K\times K \rightarrow \mathbb{R}$ satisfying $f(x,x)=0$ for all $x\in K$, 
The equilibrium problem 
%in the sense of Blum and Oettli 
\cite{BO} associated with the bifunction $f$ on $K$ is stated as follows: 
find $\bar{x}\in K$ such that
\begin{equation*}\label{ep}
\tag{$EP$} 
%\text{Find} \;\; \bar{x}\in K \;\; \text{such that}  \;\;\; 
f(\bar{x},y)\geq 0, \;\; \forall y\in K.
\end{equation*}
This abstract variational formulation constitutes a  
convenient unified mathematical model for many problems in applied mathematics such as optimization problems, variational and hemivariational inequalities, fixed-point and saddle point problems,  network equilibrium problems, Nash equilibrium and others, see for instance \cite{BO,bns,CCR,mosco} and the bibliography therein.

\smallskip
One of the most popular algorithm for solving $(EP)$ is the proximal point method $(PPM)$ extended from variational inequalities to equilibrium problems by Moudafi \cite{moud3}.
%Further efforts have then been dedicated to iterative methods for solving \eqref{ep}, let us quote in this sense the paper by Moudafi \cite{moud1} who extended the proximal point method $(PPM)$ from variational inequalities to equilibrium problems.
In this regard, by introducing the resolvent of the bifunction $f$ (see, \cite{BO}), defined, for $\lambda >0$, by $$ J_\lambda^f(x):=\{z\in K: f(z,y)+\frac{1}{\lambda}\langle z-x,y-z\rangle\geq 0, \; \forall y\in K\},$$
%inspired by the resolvent of bifunctions which appeared implicitly in \cite{BO}, 
the author in \cite{moud3} suggested a basic version of $(PPM)$ for $(EP)$ in the monotone framework. This method generates the next iterate $x_{n+1}$, for each $n\geq 0$, by solving the following subproblem $x_{n+1}=J_{r_n}^f(x_n)$, i.e.,
\begin{equation}\label{PPM-EP}
f(x_{n+1} , y) + \frac{1}{r_n}\langle x_{n+1} - x_{n} , y - x_{n+1}\rangle \geq 0,\;\; \forall y \in K,
\end{equation}
where $\lbrace r_n \rbrace$ is a sequence of nonnegative numbers. \\
%the above scheme can be written as $x_{n+1}=J_\lambda^f(x_n)$.
\noindent Under the monotonicity condition on $f,$ Moudafi proved the weak convergence of the sequence $\{x_n\}$ generated by
algorithm \eqref{PPM-EP} to a solution of $(EP)$. Thereby, a great interest has been brought to the study of $(EP)$ by means of splitting %iterative methods that include gradient-descent (or forward) methods and 
proximal point (or backward) methods; one can consult \cite{ant,bk,moud-thera} and the references therein.\\

 Given its growing interest in applications to different applied domains, the problem $(EP)$  is currently considered  as one of the important  research directions in which the optimization community is interested.  Indeed, the study of the existence of a solution to this problem still falls within the scope of very recent studies concerning new methods of resolution. Let us quote in this sense the paper \cite{Cotrina}  
%and its references therein, 
in which the authors, by using the celebrated  \textit{Ekeland variational principle} under a weaker notion of continuity and without any convexity assumptions,  studied the existence of equilibria and quasi-equilibria in the setting of metric spaces. %See also its references therein wherein new concepts of equilibrium are introduced under new modes of relaxed 
%generalized 
%quasimonotonicity and quasiconvexity}.  
The bibliography of this article refers to new equilibrium concepts in which quasi-monotonicity and quasi-convexity are relaxed.\\
In this paper, we study the problem $(EP)$ in a general framework where we focus our interest on the following bilevel equilibrium problem: find $\bar{x}\in S_f$ such that  
\begin{equation*}\label{BEP}
\tag{$BEP$}%\text{Find} \;\; \bar{x}\in S_f \;\; \text{such that}
% \;\;\; 
 g(\bar{x},y)\geq 0, \;\; \forall y\in S_f,
\end{equation*}
where $g:K\times K\rightarrow \R$ is another real-valued bifunction and $S_f$ stands for the set of constraints defined by solutions to the second level equilibrium problem $(EP)$ given by $S_f:=\lbrace u\in K : f(u,y)\geq0 \;\; \forall y\in K\rbrace.$ 
We denote by $S$ the set of solutions to $(BEP)$ which we assume to be nonempty. \\ 

The problem $(BEP)$ was implicitly introduced in the paper by Chadli, Chbani and Riahi \cite{CCR} in the setting of the so-called \textit{viscosity principle} for equilibrium problems. This principle aims at a good selection of the upper equilibrium among solutions to the lower
level equilibrium problem. This class of hierarchical problems covers in both levels, all the cases cited previously for an equilibrium problem. 
%and also encompasses potential applications in a variety of research fields as transportation modelling, economics, ecology, engineering and others
Besides their unification aspect, bilevel equilibrium problems has proved over the past two decades, very good applicability in different fields covering mechanics, engineering sciences and economy, see \cite{Dempe} and references therein. 
%it is currently considered as one of the fashion research directions that interest the optimization and equilibrium community. 
Greater attention was then paid to this class of problems regarding the existence of solutions via dynamical and algorithmic approaches and also from the point of view of parametric stability. The interested reader can consult the following recent investigations \cite{ait2021dynamical, bento-2016, CMR, CR1, moud2, Thuy-2017} and the references therein.
In recent years, algorithmic resolution procedures have been widely studied
for solving $(BEP)$. Moudafi \cite{moud2} introduced, by using the penalty method \cite{CCR}, the regularized proximal point method $(RPPM)$ for solving $(BEP)$. This algorithm is described as follows: from a starting point $x_0\in K$, for each $n\geq 0$, the next iterate $x_{n+1}$ is defined 
by the proximal iteration $x_{n+1}:=J_{\lambda_n}^{f+\beta_ng} (x_n)$, i.e.,
%$x_{n+1}=\text{prox}_{\lambda_n(f+\beta_n g)}(x_n)$ 
\begin{equation}\label{PPM-BEP}
%\tag{$PPM$}\;\;\;
f(x_{n+1},y)+\beta_n g(x_{n+1},y)+\dfrac{1}{\lambda_n}\langle x_{n+1}-x_n, y-x_{n+1} \rangle \geq 0, \;\; \forall y\in K, 
\end{equation}	
where $\lbrace \beta_n \rbrace$ and $\lbrace \lambda_n \rbrace$ are two sequences of nonnegative reals.
More precisely, under suitable assumptions on the bifunctions $f$ and $g$, Moudafi  proved that the sequence $\lbrace x_n \rbrace$ generated by algorithm \eqref{PPM-BEP} converges weakly to a solution of $(BEP)$ provided that $$\displaystyle \liminf_{n\rightarrow +\infty} \lambda_n>0, \displaystyle \sum_{n=0}^{+\infty} \lambda_n \beta_n <+\infty \;\text{ and } \;\|x_{n+1}-x_n\|=o(\beta_n).$$ 
The drawback of the last assumption is the difficulty to choose such a control sequence $(\beta_n)$ because we do not know the convergence rate of $\|x _{n+1} - x_ n \|$. The author in \cite{moud2} conjectured that this restrictive assumption can be removed via the introduction of a conditioning notion for equilibrium bifunctions.

Later on, the authors in \cite{CR1} considered an alternate proximal scheme, which generates the next iterates $x_{n+1}$ by solving
the regularized problem $x_{n+1}:=J_{\lambda_n}^{\beta_nf+g} (x_n)$, i.e., $$\beta_n f(x_{n+1},y)+g(x_{n+1},y)+\dfrac{1}{\lambda_n}\langle x_{n+1}-x_n, y-x_{n+1} \rangle, \;\; \forall y\in K.$$ %where $\lbrace \lambda_n \rbrace$ and $\lbrace \beta_n \rbrace$ are positive parameters.
Here, the difficulty of the method $(RPPM)$ mentioned in \cite{moud2} has been solved. Following \cite{attouch2011prox} and under %the same 
a similar geometric assumption formulated in terms of the Fenchel conjugate function associated to the bifunction $f$, %but 
%and by reinforcing hypothesis $\displaystyle \sum_{n=0}^{+\infty}\lambda_n=+\infty$ by $\displaystyle \liminf_{n\rightarrow +\infty}\lambda_n>0$, 
they analyzed both the
weak and strong convergence of their algorithm %not the average but 
to a solution of $(BEP)$. %In particular, the geometric assumption \eqref{fitz-discret} shows that, as conjectured in \cite{moud2}, the restrictive assumption $\|x_{n+1}-x_n\|=o(\epsilon_n)$ can be removed via the introduction of a conditioning notion for equilibrium bifunctions.
More recently, in \cite{clr}, the authors proposed a forward-forward algorithm and a forward-backward algorithm for solving $(BEP)$ under quite mild conditions where the bifunction of the two level equilibrium problems are supposed pseudomonotone. \\

As a continuity of the studies of equilibrium problems by means of proximal iterative methods, we propose an inertial proximal method for solving $(BEP)$.
It is well known that the inertial proximal iteration, where the next iterate is defined by making use of the previous two iterates, may be interpreted as an implicit discretization of differential systems of second order in time. The presence of inertial terms improves the convergence behavior of the generated sequences. We emphasize that the origin of these methods dates back to \cite{AA} as part of the approach to a solution of 
an abstract inclusion of the form: find $\bar{x}\in H$ such that 
\begin{equation}\label{inclu-prob}
0\in A(\bar{x}),
\end{equation}
where $A: H\rightrightarrows H$ is a maximally monotone operator and the solution set $A^{-1}(\{0\})$ is assumed to be nonempty.
In this regard, giving two sequences of nonnegative numbers $\lbrace \alpha_n \rbrace$ and $\lbrace \lambda_n \rbrace$, the authors in \cite{AA} considered the following iterative scheme:
\begin{equation*}\label{origin-ipa}
x_{n+1}-x_n-\alpha_n(x_n-x_{n-1})+\lambda_nA(x_{n+1}) \ni 0,
\end{equation*}
and proved the weak convergence of the sequence $\{x_n\}$ generated by the above algorithm towards a solution of \eqref{inclu-prob} under appropriate conditions on the parameters $\lbrace \alpha_n \rbrace$ and $\lbrace \lambda_n \rbrace$ whenever the restrictive assumption $\displaystyle \sum_{n=1}^{+\infty} \alpha_n\| x_n-x_{n-1} \|^2<+\infty$ holds.
\vskip 2mm

Inspired by the results presented in \cite{moud1} 
in the framework of solving $(EP)$ by an approximate second order differential proximal procedure, 
and also illuminated by the results explored in \cite{CR1,moud2}, we propose a new approximate inertial proximal scheme to solve $(BEP)$: \\

\hrule
\vspace{2mm}
\noindent {\bf Algorithm:} $($Inertial proximal algorithm $(IPA))$.
\vspace{2mm}
\hrule
\vspace{2mm}
\noindent {\bf Initialization:} Choose positive sequences $\lbrace\beta_n \rbrace$, $\lbrace \lambda_n \rbrace$, and a nonnegative real number $\alpha \in [0, 1]$. Take arbitrary $x_0, x_1\in K$. 
\vspace{2mm}
\hrule
\vspace{2mm}
\noindent {\bf Iterative step:} %\begin{itemize}
	%\item 
	For every $n\geq 1$ and given current iterates $x_{n-1},  x_n\in K$ set $y_n:=x_n+\alpha(x_n-x_{n-1})$ and define $x_{n+1}\in K$ by 
	$x_{n+1}:=J_{\lambda_n}^{\beta_nf+g} (y_n),$
	%$
	%x_{n+1}=\text{prox}_{\lambda_n (\beta_nf+g)} (y_n) 
	%$
	i.e., 
	\begin{equation}\label{algo}
	\beta_n f(x_{n+1},y)+g(x_{n+1},y)+\frac{1}{\lambda_n}\langle x_{n+1}-y_n, y-x_{n+1}\rangle \geq 0, \;\;\forall y\in K.
	\end{equation}
%	\item If $x_{n+1}=y_n$ then stop and $x_{n+1}$ is the solution of (BEP).
%\end{itemize}
%where $y_n:=x_n+\alpha(x_n-x_{n-1})$, $\alpha$ is a nonnegative real number and $\lbrace\beta_n \rbrace$, $\lbrace \lambda_n \rbrace$ are sequences of positive numbers.
%where $y_n:=x_n+\alpha(x_n-x_{n-1})$.
%\vspace{2mm}
\hrule
\vspace{2mm}
In the above algorithm, $\lbrace \lambda_n \rbrace$ denotes the sequence of step sizes, $\lbrace \beta_n \rbrace$ the sequence of penalization parameters, and $\alpha\in [0, 1]$ the parameter that controls the inertial terms. The proposed numerical scheme recovers, when $\alpha=0$, the algorithm investigated in \cite{CR1}, and if in addition $f=0$, the one suggested in \cite{moud3}. The Fitzpatrick transform of the bifunction $f$ will be a key ingredient in our convergence analysis. Indeed, we 
provide conditions under which the sequence generated by the algorithm $(IPA)$ weakly or strongly converges to a solution of $(BEP)$. 
More precisely, under a discrete counterparts \eqref{fitz-discret} of the geometric condition used in \cite{CMR}
and formulated in terms of the  Fitzpatrick transform of the bifunction $f$, we first prove that (see Theorem \ref{thm-weak}) the sequence generated by 
%the above scheme 
$(IPA)$ weakly converges to a solution of $(BEP)$ provided that $0\leq \alpha< \frac{1}{3}$, $\displaystyle \liminf_{n\rightarrow +\infty}\lambda_n>0$ and $\displaystyle \lim_{n\rightarrow +\infty}\beta_n =0$. Afterwards, by strengthening the monotonicity assumption on the upper level bifunction $g$, and whenever $0\leq \alpha< \frac{1}{3}$ and $\displaystyle \sum_{n=1}^{\infty}\lambda_n=+\infty$, we show (see Theorem \ref{strong1}) the strong convergence of the trajectories generated by the proposed algorithm to the unique solution of $(BEP)$. 
Then, we show (see Theorem \ref{strong2}) that, without the need of the geometric assumption \eqref{fitz-discret}, the sequence converges strongly to the unique solution of $(BEP)$ when the parameters $\lambda_n$ and $\beta_n$ satisfy additionally conditions $\displaystyle \lim_{n\rightarrow +\infty}\lambda_n =0$, $\displaystyle \lim_{n\rightarrow +\infty}\beta_{n}=+\infty$ and $\displaystyle \liminf_{n\rightarrow +\infty}\lambda_n\beta_n>0$. 
The main advantage of our approach is that it provides convergence without any restrictive assumption on the trajectories. The results can be seen as an extension to the second order counterparts of the ones given in \cite{CR1, moud2}. To our knowledge, such inertial proximal schemes have been studied only for the first level equilibrium problem $(EP)$, see for instance \cite{j,Hieu-Gibali} and the references therein. 
%and still be not investigated for the two level problem $(BEP)$. 
As applications, we discuss the hierarchical convex minimization case and equilibrium problems under a saddle point constraint. Numerical experiment is thereafter given to illustrate our theoretical results. We end the paper by concluding comments.\\

\section{Background and technical lemmata}
\label{sec:1}
In this section, we give some preliminary results and definitions that will be used in the sequel.
Throughout this paper, unless stated otherwise, let $K$ be a nonempty closed and convex subset of a real Hilbert space $H$. We first recall some well known concepts on monotonicity and continuity of real bifunctions. % that will be used in the sequel.

\begin{defi}
A bifunction $f:K\times K \rightarrow \mathbb{R}$ is called: 
\begin{itemize} 
\item [$(i)$] monotone if $f(x,y)+f(y,x)\leq0$ for all $x,y \in K$;
\item [$(ii)$] $\gamma$-strongly monotone, if there exists $\gamma >0$ such that $$f(x,y)+f(y,x)\leq -\gamma \|x-y\|^2 \; \text{for all}\;x,y \in K;$$
\item [$(iii)$] upper hemicontinuous, if $$\lim_{t\searrow0}
f(tz+(1-t)x,y)\leq f(x,y)\;\text{ for all} \;x,y,z\in K;$$
\item [$(iv)$] lower semicontinuous at $y$ with respect to the second argument on $K$, if $$f(x,y)\leq \displaystyle \liminf_{w\rightarrow y}f(x,w)\;\text{ for all}\; x\in K;$$
\item [$(v)$] an equilibrium bifunction, if for each $x\in K$, $f(x,x)=0$ and $ f(x,\cdot)$ is convex and lower semicontinuous.
\end{itemize}	    
\end{defi}

%%%%%%%%%%%%%%%%%%%%%%%%%%%%%%%%%%%
%One of the frequently used methods in the recent literature obtain solutions to $(EP)$ via Minty solutions to the following dual equilibrium problem: find $\bar{x}\in K$ such that
 The dual equilibrium problem associated with the bifunction $f$ on $K$ is stated as follows: 
find $\bar{x}\in K$ such that
\begin{equation*}\label{dep}
\tag{$DEP$}
f(y,\bar{x})\leq 0, \;\; \forall y\in K.
\end{equation*}
The set of solutions to $(DEP)$ is called the \textit{Minty solution set}. 
The following result gives the link between Minty equilibria and  the standard ones.
%%%%%%%%%%%%%%%%%%%%%%%%%%%%%%%%%%%%%%%%%%

\begin{lem}[Minty's Lemma, \cite{BO}]\label{lem-Minty}
	\begin{itemize}
		\item[(i)] Whenever $f$ is monotone, every
		solution of $(EP)$ is a solution of $(DEP)$.
		\item[(ii)] Conversely, if $f$ is upper hemicontinuous and equilibrium bifunction, then each solution of $(DEP)$ is a solution of $(EP)$. 
	\end{itemize}
\end{lem}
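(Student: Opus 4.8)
The plan is to treat the two implications separately; the first is immediate from the definition of monotonicity, and the second is the classical segment (``Minty'') argument combined with the upper hemicontinuity hypothesis.

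For part (i), I would take $\bar{x}\in K$ solving $(EP)$, so that $f(\bar{x},y)\ge 0$ for every $y\in K$, and simply invoke the monotonicity inequality $f(\bar{x},y)+f(y,\bar{x})\le 0$ to deduce $f(y,\bar{x})\le -f(\bar{x},y)\le 0$ for all $y\in K$; this is exactly the statement that $\bar{x}$ is a Minty solution. No further work is needed here.

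For part (ii), let $\bar{x}\in K$ be a Minty solution, i.e. $f(y,\bar{x})\le 0$ for all $y\in K$, and fix an arbitrary $y\in K$. The key device is to introduce, for $t\in(0,1]$, the point $x_t:=ty+(1-t)\bar{x}$, which belongs to $K$ by convexity of $K$, so that in particular $f(x_t,\bar{x})\le 0$. Since $f$ is an equilibrium bifunction, $f(x_t,x_t)=0$ and $f(x_t,\cdot)$ is convex, whence $0=f(x_t,x_t)=f(x_t,ty+(1-t)\bar{x})\le t\,f(x_t,y)+(1-t)f(x_t,\bar{x})\le t\,f(x_t,y)$, and therefore $f(x_t,y)\ge 0$ for every $t\in(0,1]$. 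Letting $t\searrow 0$ and applying upper hemicontinuity with $x=\bar{x}$ and $z=y$ in the defining inequality gives $0\le \lim_{t\searrow 0}f(x_t,y)\le f(\bar{x},y)$. As $y\in K$ was arbitrary, $\bar{x}$ solves $(EP)$.

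The only step requiring any care — and the closest thing to an obstacle — is the final passage to the limit: one must apply upper hemicontinuity precisely along the segment $[\bar{x},y]$ and combine it with the sign information $f(x_t,y)\ge 0$ coming from convexity of the second argument; the rest is bookkeeping. I would also note for the record that convexity of $K$ is what legitimizes the use of $x_t$, and that lower semicontinuity of $f(x,\cdot)$ plays no role in this particular argument.
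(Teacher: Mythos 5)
Your proof is correct: part (i) is the immediate consequence of monotonicity, and part (ii) is exactly the classical Minty segment argument (convexity of $f(x_t,\cdot)$ together with $f(x_t,x_t)=0$ gives $f(x_t,y)\ge 0$, and upper hemicontinuity along $[\bar{x},y]$ passes this to the limit), which is the proof in the cited reference \cite{BO}; the paper itself states the lemma without proof. The only cosmetic caveat is that the limit as $t\searrow 0$ need not exist a priori, so one should read it as a $\limsup$ — consistent with how the paper's definition of upper hemicontinuity is meant.
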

%%%%%%%%%%%%%%%%%%%%%%%%%%%%%%%%%%%%%
The next lemma introduces the notion of \textit{resolvent}  associated to bifunctions. This concept is crucial in our approach for solving $(BEP)$.

\begin{lem}\cite{CR2} \label{lem0}
	Suppose that $f:K\times K\rightarrow \mathbb{R}$ is a monotone equilibrium bifunction. Then the following are equivalent:
	%satisfies  $(H_{1}),(H_2),(H_{4})$. 
	\begin{itemize}
		\item [(i)] $f$ is maximal: $(x,u)\in K\times H$ and $f(x,y)\leq \langle u,x-y \rangle, \; \forall y\in K$ imply that $f(x,y)+\langle u,x-y \rangle \geq 0\; \forall y\in K$;
		\item [(ii)] for each $x\in H$ and $\lambda > 0$, there exists a unique  $z_{\lambda}=J^{f}_{\lambda}(x)\in K$,  called the resolvent of $f$ at $x$,  such that
		\begin{equation}\label{aa}
		\lambda f(z_{\lambda},y) +\langle y-z_{\lambda} ,z_{\lambda}-x  \rangle \geq 0, \;\; \forall y\in K.
		\end{equation}
	\end{itemize}
	Moreover, $\bar{x}\in S_f$ if, and only if, $\bar{x} = J^{f}_{\lambda}(\bar{x})$ for every $\lambda>0$ if, and only if, $\bar{x} = J^{f}_{\lambda}(\bar{x})$ for some $\lambda>0$.
\end{lem}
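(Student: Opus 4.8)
The plan is to carry over to the bifunction setting the classical Minty equivalence ``a monotone operator $A$ on a Hilbert space is maximal if and only if $I+\lambda A$ is onto'', and then to read the fixed-point description of $S_f$ directly off \eqref{aa}. Uniqueness in (ii) uses only monotonicity and no maximality: if $z_1,z_2\in K$ both satisfy \eqref{aa} for the same $x$ and $\lambda$, write that inequality for $z_1$ with test point $y=z_2$, write it for $z_2$ with $y=z_1$, and add; the two bilinear terms collapse to $-\|z_1-z_2\|^2$ while $f(z_1,z_2)+f(z_2,z_1)\le 0$, forcing $z_1=z_2$. For (ii)$\,\Rightarrow\,$(i), take $(x,u)\in K\times H$ satisfying the maximality hypothesis, apply (ii) at the shifted point $x+\lambda u$ to obtain $z:=J^f_\lambda(x+\lambda u)$ with $\lambda f(z,y)+\langle y-z,\,z-x-\lambda u\rangle\ge 0$ for all $y\in K$, and then test this inequality at $y=x$ while testing the maximality hypothesis at $y=z$; the two combine to force $\|x-z\|^2\le 0$, hence $z=x$, and substituting $z=x$ back into the resolvent inequality leaves precisely $f(x,y)+\langle u,x-y\rangle\ge 0$ for all $y$, which is (i).

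For (i)$\,\Rightarrow\,$(ii), fix $x\in H$ and $\lambda>0$ and observe that $z\in K$ satisfies \eqref{aa} exactly when $h(z,y):=\lambda f(z,y)+\langle y-z,\,z-x\rangle\ge 0$ for all $y\in K$, i.e.\ when $z$ solves the equilibrium problem for $h$ on $K$. This $h$ is an equilibrium bifunction in its second argument ($h(z,z)=0$, $h(z,\cdot)$ convex and lower semicontinuous) and is strongly monotone of modulus $1$, because $h(z,y)+h(y,z)=\lambda\bigl(f(z,y)+f(y,z)\bigr)-\|z-y\|^2\le-\|z-y\|^2$. To produce such a $z$ I would associate with $f$ the monotone set-valued operator $A_f\colon H\rightrightarrows H$, $A_f(w):=\{u\in H:\ f(w,y)+\langle u,w-y\rangle\ge 0\ \ \forall y\in K\}$, whose monotonicity follows at once from that of $f$; then show that the maximality condition (i) is exactly what upgrades $A_f$ to a maximal monotone operator; and then invoke Minty's surjectivity theorem, $\textnormal{ran}(I+\lambda A_f)=H$. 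For the given $x$ this produces $z\in K$ with $x\in z+\lambda A_f(z)$, an inclusion that, once rewritten, is exactly \eqref{aa}; uniqueness was already noted. (More self-containedly, one may keep $h$ and solve the Minty dual of its equilibrium problem by a Ky Fan/KKM argument --- the level sets $\{z\in K:\ h(y,z)\le 0\}$ are closed since $f(y,\cdot)$ is lower semicontinuous, and the KKM property follows from the monotonicity of $h$ and the convexity of $h(z,\cdot)$ --- and then use (i) to return from a Minty solution to a genuine one.)

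Finally, for the ``moreover'' part, plug $x=\bar x$ into \eqref{aa}: the bilinear term becomes $\langle y-\bar x,\,\bar x-\bar x\rangle=0$, so if $\bar x\in S_f$ then $z=\bar x$ satisfies \eqref{aa} for every $\lambda>0$, and by the uniqueness above $\bar x=J^f_\lambda(\bar x)$ for every $\lambda>0$; conversely, if $\bar x=J^f_\lambda(\bar x)$ for some $\lambda>0$, then \eqref{aa} with $x=z_\lambda=\bar x$ reduces to $\lambda f(\bar x,y)\ge 0$ for all $y$, that is $\bar x\in S_f$. I expect the hard part to be the existence half of (i)$\,\Rightarrow\,$(ii): verifying that the maximality condition (i) really is strong enough to make the resolvent everywhere defined --- equivalently, that $A_f$ is maximal monotone, or that the strongly monotone auxiliary equilibrium problem for $h$ is solvable although no continuity is assumed on $f(\cdot,y)$ --- is where the genuine content of the statement lies.
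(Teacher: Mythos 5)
The paper offers no proof of this lemma: it is quoted from \cite{CR2}, so there is no in-paper argument to compare yours against. On its own terms, your sketch follows the standard route, and your uniqueness computation and the ``moreover'' part are correct. But your (ii)$\Rightarrow$(i) step does not close with the hypothesis of (i) as printed. Setting $z=J^f_\lambda(x+\lambda u)$ and testing \eqref{aa} at $y=x$ gives $\lambda f(z,x)\ge \|x-z\|^2+\lambda\langle u,x-z\rangle$; to conclude $\|x-z\|^2\le 0$ you need the \emph{upper} bound $f(z,x)\le\langle u,x-z\rangle$, i.e.\ the maximality hypothesis in the Minty form $f(y,x)\le\langle u,x-y\rangle$ evaluated at $y=z$. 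The statement as printed reads $f(x,y)\le\langle u,x-y\rangle$, which at $y=z$ bounds $f(x,z)$ from above; combining with monotonicity, $f(z,x)\le-f(x,z)$, only produces a second upper bound on $f(x,z)$ and no contradiction. (The printed hypothesis is almost certainly a typo for $f(y,x)\le\langle u,x-y\rangle$: with $f(x,y)=\langle Ax,y-x\rangle$ and $K=H$ the printed version would force $Ax=0$. You should state which version you are proving and display the monotonicity bookkeeping, since as written your ``the two combine'' step is exactly where the sign goes wrong.)

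The second, larger gap is the one you flag yourself: the existence half of (i)$\Rightarrow$(ii) is only gestured at. The claim that condition (i) ``is exactly what upgrades $A_f$ to a maximal monotone operator'' conceals the real work: maximal monotonicity of $A_f$ requires that \emph{monotone relatedness to the graph of $A_f$} imply $u\in A_f(x)$, whereas (i) takes as hypothesis the pointwise inequality $f(y,x)\le\langle u,x-y\rangle$ for \emph{all} $y\in K$ --- passing from the former to the latter is nontrivial because $A_f(v)$ may be empty for some $v$ and, where nonempty, $\sup_{w\in A_f(v)}\langle w,x-v\rangle$ need not equal $f(v,x)$. The KKM alternative has parallel gaps: Ky Fan's lemma on an unbounded $K$ needs the coercivity supplied by the strong monotonicity of your auxiliary $h$ (fine), but returning from a Minty solution of the $h$-problem to a genuine one requires an upper hemicontinuity in the first variable that the lemma does not assume. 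Until one of these two routes is actually carried through, the equivalence is asserted rather than proved --- which is presumably why the authors import it from \cite{CR2} instead of reproving it.
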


%%%%%%%%%%%%%%%%%%%%%%%%%%%%%%%%%%%%%%%%%%%%%%%%%%%
For the first main result of Section $3$ concerning the weak convergence of the sequence generated by algorithm \eqref{algo},
% that we formulate and prove afterwards,
we will make use of the two following useful lemmata.
\begin{lem}[discrete Opial Lemma, \cite{opial}]\label{disc-opial}
	Let $C$ be a nonempty subset of $H$ and $(x_k)_{k\geq0}$ be a
	sequence in $H$ such that the following two conditions hold:
	\begin{itemize}
		\item [(i)] For every $x\in C$, $\displaystyle \lim_{k\rightarrow +\infty}\|x_k-x\|$ exists.
		\item [(ii)] Every weak sequential cluster point of $(x_k)_{k\geq0}$ is in $C$.
	\end{itemize}
	Then, $(x_k)_{k\geq0}$ converges weakly to an element in $C$.
\end{lem}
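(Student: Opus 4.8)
The plan is to follow the classical argument of Opial. First I would use condition (i) to deduce that $(x_k)_{k\geq 0}$ is bounded: since $C\neq\emptyset$, fix some $x\in C$; then the real sequence $\|x_k-x\|$ converges, hence is bounded, and therefore so is $(x_k)$. As bounded sets in a Hilbert space are relatively weakly sequentially compact, $(x_k)$ admits at least one weak sequential cluster point, and by (ii) every such point belongs to $C$; in particular the set of weak sequential cluster points is nonempty.

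The heart of the proof is to show that this set is a singleton. Suppose $u$ and $v$ are weak sequential cluster points, say $x_{k_j}\rightharpoonup u$ and $x_{k_i}\rightharpoonup v$; by (ii) we have $u,v\in C$. Starting from the elementary Hilbert-space identity
\[
\|x_k-u\|^2-\|x_k-v\|^2 = 2\langle x_k,\,v-u\rangle + \|u\|^2 - \|v\|^2,
\]
I let $k\to+\infty$: the left-hand side converges by (i) (applied with $x=u\in C$ and $x=v\in C$), so $\lim_k\langle x_k,v-u\rangle$ exists. Evaluating this limit along the subsequence $(k_j)$ gives $\langle u,v-u\rangle$, and along $(k_i)$ it gives $\langle v,v-u\rangle$; equating the two yields $\langle u-v,\,v-u\rangle=0$, i.e.\ $\|u-v\|^2=0$, so $u=v$.

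Finally I would invoke the standard fact that a bounded sequence in a Hilbert space with a unique weak sequential cluster point converges weakly to that point: if it did not, some weak neighborhood of the cluster point would be missed infinitely often, and extracting a further subsequence from that tail (still bounded, hence with its own weak sequential cluster point) would produce a second cluster point, contradicting uniqueness. Hence $x_k\rightharpoonup\bar x$, where $\bar x$ is the unique weak sequential cluster point, and $\bar x\in C$ by (ii), which is exactly the assertion.

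The only genuinely non-elementary ingredient is the weak sequential compactness of bounded sets (equivalently, reflexivity of Hilbert spaces together with the Eberlein--\v Smulian theorem); the rest reduces to the two-line identity above plus a routine subsequence argument. I expect the write-up to be short, with the step \emph{unique weak cluster point $\Rightarrow$ weak convergence} being the only place where one must be slightly careful about the topology.
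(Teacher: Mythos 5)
Your proof is correct: boundedness from (i), weak sequential compactness, uniqueness of the weak cluster point via the identity $\|x_k-u\|^2-\|x_k-v\|^2=2\langle x_k,v-u\rangle+\|u\|^2-\|v\|^2$ together with (i) and (ii), and then the routine subsequence argument showing that a bounded sequence with a unique weak sequential cluster point converges weakly. The paper itself gives no proof of this lemma --- it is quoted as a known result with a citation to Opial --- so there is nothing to compare against; your argument is the standard one and fills that gap adequately.
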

%%%%%%%%%%%%%%%%%%%%%%%%%%%%%%%%%%%%%%%%%%%%%%%%%%%
\begin{lem}\label{lem-sum}
	Let $0\leq p\leq 1$, and let $\{b_k\}$ and $\{w_k\}$ be two sequences of nonnegative numbers such that, for all $k\geq 0$, 
	\begin{equation*}
		b_{k+1}\leq pb_k+w_k.
	\end{equation*} 
	If \;$\sum_{k=0}^{+\infty} w_k< +\infty$, then \;$\sum_{k=0}^{+\infty} b_k< +\infty$.
\end{lem}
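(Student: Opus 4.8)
The plan is to sum the recursive inequality and bound the partial sums of $\{b_k\}$ uniformly. Put $S_N := \sum_{k=0}^{N} b_k$ and $W := \sum_{k=0}^{+\infty} w_k$, which is finite by hypothesis. Adding the inequalities $b_{k+1} \le p\,b_k + w_k$ over $k = 0, 1, \dots, N$ and rearranging gives
\begin{equation*}
S_{N+1} - b_0 \;=\; \sum_{k=0}^{N} b_{k+1} \;\le\; p \sum_{k=0}^{N} b_k + \sum_{k=0}^{N} w_k \;\le\; p\,S_N + W ,
\end{equation*}
where the last step uses $w_k \ge 0$. Since every $b_k \ge 0$, the partial sums satisfy $S_N \le S_{N+1}$, so the display yields $(1-p)\,S_N \le S_{N+1} - p\,S_N \le b_0 + W$ for every $N$.

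Provided $p < 1$ — which is the case in all our applications, and is in fact needed, since for $p = 1$ one may take $b_k \equiv 1$, $w_k \equiv 0$ — this gives the uniform bound $S_N \le (b_0 + W)/(1-p)$. The sequence $(S_N)_{N\ge 0}$ is nondecreasing and bounded above, hence convergent, and therefore $\sum_{k=0}^{+\infty} b_k = \lim_{N\to+\infty} S_N < +\infty$, which is the assertion.

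An equivalent route that avoids the monotonicity observation is to iterate the recursion directly: a one-line induction gives $b_{k+1} \le p^{k+1} b_0 + \sum_{j=0}^{k} p^{k-j} w_j$ for all $k \ge 0$. Summing over $k$ and interchanging the order of the resulting double sum — each inner geometric series being bounded by $(1-p)^{-1}$ — again produces a finite bound for $\sum_k b_k$ in terms of $b_0$ and $W$. I do not anticipate any genuine difficulty in either version: the estimates are completely elementary, and the only point that requires care is the boundary value $p = 1$, which must be excluded for the conclusion to hold.
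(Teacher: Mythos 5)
Your proof is correct and follows essentially the same route as the paper's: summing the recursion to get a bound of the form $(1-p)\sum_{k=0}^{N} b_k \le b_0 + \sum_{k} w_k$ and then dividing by $1-p$. Your observation that $p=1$ must be excluded is well taken and in fact sharper than the paper's own argument: the lemma as stated admits $p=1$, for which $b_k\equiv 1$, $w_k\equiv 0$ is a counterexample, and the paper's proof slides over this point by concluding from ``$1-p\ge 0$'' where $1-p>0$ is needed --- harmless in the application, since there $p=\alpha<\tfrac{1}{3}$.
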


\begin{proof}
	We have $$(1-p)b_k\leq b_k-b_{k+1}+w_k.$$	
	Summing up from  $k=0$ to $n$, we get
	\begin{align*}
		(1-p)\sum_{k=0}^{n}b_k&\leq \sum_{k=0}^{n}(b_k-b_{k+1})+\sum_{k=0}^{n}w_k\\
		&=b_0-b_{n+1}+\sum_{k=0}^{n}w_k \\
		&\leq b_0+\sum_{k=0}^{n}w_k.
	\end{align*}
	And since $1-p\geq 0$ and $\sum_{k=0}^{+\infty} w_k< +\infty$, we conclude that $\sum_{k=0}^{+\infty} b_k< +\infty$.
\end{proof}
We also need the following technical lemmata.
\begin{lem}\cite{Bauchk}\label{lem5-a}
	For all $x, y \in H$ and $\beta\in\mathbb{R},$ the following equality holds,
	\begin{equation*}
	\|\beta x+ (1-\beta)y\|^{2} = \beta\|x\|^{2} + (1-\beta)\|y\|^{2} -\beta(1-\beta)\|x-y\|^{2}.
	\end{equation*}
\end{lem}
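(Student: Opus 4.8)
The statement is a purely algebraic identity in the inner-product space $H$, so the plan is simply to expand both sides and compare. First I would expand the left-hand side using bilinearity and symmetry of $\langle\cdot,\cdot\rangle$:
$$\|\beta x+(1-\beta)y\|^{2}=\beta^{2}\|x\|^{2}+2\beta(1-\beta)\langle x,y\rangle+(1-\beta)^{2}\|y\|^{2}.$$
Independently, I would expand the right-hand side, substituting $\|x-y\|^{2}=\|x\|^{2}-2\langle x,y\rangle+\|y\|^{2}$, and then collect the coefficients of $\|x\|^{2}$, of $\|y\|^{2}$, and of $\langle x,y\rangle$.

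The coefficient of $\|x\|^{2}$ on the right is $\beta-\beta(1-\beta)=\beta^{2}$, the coefficient of $\|y\|^{2}$ is $(1-\beta)-\beta(1-\beta)=(1-\beta)^{2}$, and the coefficient of $\langle x,y\rangle$ is $2\beta(1-\beta)$. These match the three coefficients obtained from the left-hand side, which proves the identity. An equivalent route is to observe that the ``convexity defect'' $\beta\|x\|^{2}+(1-\beta)\|y\|^{2}-\|\beta x+(1-\beta)y\|^{2}$ is exactly the discrete Jensen gap of the function $\|\cdot\|^{2}$ at the two points $x,y$ with weights $\beta,1-\beta$, and a one-line computation shows this gap equals $\beta(1-\beta)\|x-y\|^{2}$.

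There is no genuine obstacle here; the only care needed is the bookkeeping of coefficients. It is worth recording that the proof uses no sign information on $\beta$, so the identity in fact holds for every real $\beta$ (as the statement asserts), not merely for $\beta\in[0,1]$; the restriction to convex combinations is only what is needed in the subsequent convergence analysis.
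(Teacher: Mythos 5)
Your proof is correct: expanding $\|\beta x+(1-\beta)y\|^{2}$ by bilinearity and matching coefficients with the expanded right-hand side (using $\|x-y\|^{2}=\|x\|^{2}-2\langle x,y\rangle+\|y\|^{2}$) is exactly the standard argument, and your remark that no sign restriction on $\beta$ is needed is accurate. The paper itself gives no proof, citing the result from the literature, so there is nothing further to compare; your direct computation is the intended one.
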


\begin{lem}\cite{CR1}\label{lem3a}
	Let $\{a_n\}$ be a sequence of real numbers that does not decrease at
	infinity, in the sense that there exists a subsequence $\{a_{n_k}\}_ {k\geq0}$ of $\{a_n \}$ which satisfies
	$$a_{n_k} < a_{n_k+1} \quad \mbox{for all} \quad k \geq 0.$$ Then, the sequence of integers $\{\sigma(n)\}_{n\geq n_0}$ defined
	by
	$\sigma(n) := \max\{k \leq n \;:\; a_k < a_{k+1} \}$
	is a nondecreasing sequence verifying $\displaystyle \lim_{n\rightarrow +\infty} \sigma(n) = \infty$ and, for all $n \geq n_0$
	$$a_{\sigma(n)} < a_{\sigma(n)+1}\quad \mbox{and}\quad a_{n}\leq a_{\sigma(n)+1} .$$
\end{lem}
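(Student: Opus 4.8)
The plan is to verify, directly from the definition, each of the claimed properties of $\sigma$, deferring the only substantive point to a two-case argument at the end. First I would check that $\sigma(n)$ is well defined for every $n\ge n_0$, where $n_0$ denotes the first term of the extracted subsequence $(n_k)_{k\ge 0}$. Set $A_n:=\{k\le n : a_k<a_{k+1}\}$. Since $a_{n_0}<a_{n_0+1}$, we have $n_0\in A_n$ whenever $n\ge n_0$, so $A_n$ is nonempty and bounded above by $n$; hence $\sigma(n)=\max A_n$ exists. Monotonicity is immediate: $A_n\subseteq A_{n+1}$, so $\sigma(n)=\max A_n\le \max A_{n+1}=\sigma(n+1)$. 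To see that $\sigma(n)\to+\infty$, observe that $A_n$ contains arbitrarily large elements, namely all $n_k$ with $n_k\le n$: given $N$, choose $k$ with $n_k>N$; then for every $n\ge n_k$ one has $\sigma(n)\ge n_k>N$.

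Next, the strict inequality $a_{\sigma(n)}<a_{\sigma(n)+1}$ holds simply because $\sigma(n)\in A_n$ by construction. For the remaining inequality $a_n\le a_{\sigma(n)+1}$ I would split into two cases. If $\sigma(n)=n$, then $a_n=a_{\sigma(n)}<a_{\sigma(n)+1}$ and we are done. If $\sigma(n)<n$, then by the \emph{maximality} of $\sigma(n)$ no index $k$ with $\sigma(n)<k\le n$ lies in $A_n$, i.e. $a_k\ge a_{k+1}$ for all such $k$; chaining these inequalities for $k=\sigma(n)+1,\sigma(n)+2,\dots,n-1$ yields $a_{\sigma(n)+1}\ge a_{\sigma(n)+2}\ge\cdots\ge a_n$, hence $a_n\le a_{\sigma(n)+1}$.

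I do not anticipate any real obstacle: the whole argument is elementary set-theoretic bookkeeping about the index sets $A_n$. The only place demanding a moment's care is the last step, where one must invoke the maximality of $\sigma(n)$ — not merely its membership in $A_n$ — in order to produce the nonincreasing run $a_{\sigma(n)+1}\ge\cdots\ge a_n$ that delivers $a_n\le a_{\sigma(n)+1}$.
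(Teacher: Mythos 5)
Your proof is correct: well-definedness, monotonicity and divergence of $\sigma(n)$, the strict inequality from $\sigma(n)\in A_n$, and the chained inequalities $a_{\sigma(n)+1}\ge\cdots\ge a_n$ obtained from the maximality of $\sigma(n)$ are exactly the standard argument for this result (Maing\'e's lemma). The paper itself gives no proof, quoting the lemma from \cite{CR1}, so there is nothing to contrast with; your write-up supplies the expected argument, including the harmless boundary case $\sigma(n)=n-1$ where the chain is empty.
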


In the rest of this section we recall some background material from convex analysis.
For a function $\varphi:H\rightarrow \mathbb{R}\cup{\lbrace+\infty\rbrace}$ we denote by $\dom \varphi=\{x\in H: \varphi(x)<+\infty\}$ its effective domain and
say that $\varphi$ is proper, if $\dom \varphi\neq \emptyset$. We also denote by $\min \varphi:= \displaystyle \inf_{x\in H} \varphi(x)$ the optimal objective value of the function $\varphi$ and by $\argmin \varphi:=\{x\in H:\varphi(x)=\min \varphi\}$ its set of global minima.

For a proper lower semicontinuous convex function $\varphi:H\rightarrow \mathbb{R}\cup{\lbrace+\infty\rbrace}$ and $x\in H$, let  $\varphi^*:H\rightarrow \mathbb{R}\cup{\lbrace+\infty\rbrace}$ be its {Fenchel conjugate  defined by  $\varphi^*(x):=\displaystyle \sup_{y\in H}\lbrace \langle x,y \rangle-\varphi(y) \rbrace$. 
If $\varphi=\delta_K$ is 
the indicator function of $K\subset H$, i.e., $\delta_K(x)=0$ if $x\in K$ and $+\infty$ otherwise, its Fenchel conjugate at $x^*\in H$ is the support function of $K$ at $x^*$, i.e.,  $\delta_K^*(x^*)=\sigma_K(x^*)=\displaystyle \sup_{y\in K}\langle x^*,y \rangle.$ The subdifferential of $\varphi$ at $x\in H$, with $\varphi(x)\in \R$ is the set $\partial \varphi(x):=\{v\in H: \varphi(y)\geq \varphi(x)+\langle v,y-x\rangle,\;\forall y\in H\}$. We take by convention $\partial \varphi(x):=\emptyset$ if $\varphi(x)=+\infty$. 
  
%%%%%%%%%%%%%%%%%%%%%%%%%%%%%%%%%%%%%%%%%%%%%%%%%%%
The normal cone to $K\subset H$ at $x\in H$ is
$$N_K(x)=\left\{\begin{array}{ll}
\{x^*\in H: \langle x^*,u-x\rangle\leq0,\;\forall u\in K\}&  \text{ if }  x\in K\\
\emptyset& \text{otherwise}.
\end{array}\right.$$
We mention that $N_K=\partial\delta_K$, and that $x^*\in N_K(x)$ if, and only if, $\sigma_K(x^*)=\langle x^*,x\rangle$.
For every $u\in K$, we denote by $f_u$ the function defined  on $H$ by $f_u(x)=f(u,x)$ if $x\in K$ and $f_u(x)=+\infty$ otherwise.
%%%%%%%%%%%%%%%%%%%%%%%%%%%%%%%%%%%%%%%%%%
For an equilibrium bifunction $f:K\times K \rightarrow \mathbb{R}$, the associate operator $A^f$ is defined by $$A^f(x):= \partial f_x(x)=\left\{\begin{array}{ll} \lbrace z\in H:f(x,y)+\langle z,x-y \rangle\geq0, \;\forall y\in K \rbrace&  \text{ if }  x\in K\\
\emptyset& \text{otherwise}.
\end{array}\right.$$ 
%%%%%%%%%%%%%%%%%%%%%%%%%%%%%%%%%%%%%%%%%%%%%%%%%%%
The Fitzpatrick transform $\mathcal{F}_f : K\times H\rightarrow \R\cup \lbrace +\infty \rbrace$  associated to a bifunction $f$ and introduced in \cite{AH,BG}, is defined by 
$$\mathcal{F}_f(x,u)=\displaystyle \sup_{y\in K}\lbrace \langle u, y\rangle+f(y, x)\rbrace.$$ Given its continuity and convexity properties, the function $\mathcal{F}_f $ has proven to be an important tool when studying the asymptotic properties of dynamical equilibrium systems,
see \cite{CMR} for a detailed presentation of these elements.
This  section  concludes with  the following auxiliary result    needed for establishing our results.

\begin{propo}\cite{CMR}\label{fitz-phi}
	If $f(x,y)=\varphi(y)-\varphi(x)$ where $\varphi :H\rightarrow \mathbb R\cup\{+\infty\}$ is convex and lower semicontinuous with $\dom \varphi\subset K$, then for every $(x,u)\in K\times H$, $\mathcal{F}_f(x, u)=\varphi(x)+\varphi^*(u).$ \end{propo}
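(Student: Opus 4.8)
The plan is to obtain the identity by unwinding the definition of the Fitzpatrick transform and recognizing a Fenchel conjugate. First I would fix $(x,u)\in K\times H$ and substitute $f(y,x)=\varphi(x)-\varphi(y)$ into the defining supremum, which gives
\[
\mathcal{F}_f(x,u)=\sup_{y\in K}\{\langle u,y\rangle+f(y,x)\}=\sup_{y\in K}\{\langle u,y\rangle+\varphi(x)-\varphi(y)\}.
\]
Since $\varphi(x)$ does not depend on the supremum variable $y$, the next step is to factor it out, yielding
\[
\mathcal{F}_f(x,u)=\varphi(x)+\sup_{y\in K}\{\langle u,y\rangle-\varphi(y)\}.
\]

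It then remains to identify the right-hand supremum with $\varphi^*(u)$. Here I would invoke the hypothesis $\dom\varphi\subset K$: any $y\in H\setminus K$ lies outside $\dom\varphi$, so $\varphi(y)=+\infty$ and the term $\langle u,y\rangle-\varphi(y)$ equals $-\infty$; hence such $y$ are irrelevant, and the supremum over $K$ coincides with the supremum over all of $H$, which is by definition $\varphi^*(u)$. Substituting back gives $\mathcal{F}_f(x,u)=\varphi(x)+\varphi^*(u)$, as claimed.

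I do not anticipate any genuine obstacle: the whole argument is essentially a one-line manipulation once the definitions are expanded. The only point warranting a little care is the bookkeeping in $\mathbb{R}\cup\{+\infty\}$ — checking that pulling $\varphi(x)$ out of the supremum and enlarging the index set from $K$ to $H$ are both legitimate (in each case the added or omitted contributions are either a common additive term or the value $-\infty$), and noting that in the degenerate case $x\notin\dom\varphi$ both sides of the claimed equality are simply $+\infty$.
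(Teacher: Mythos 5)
Your argument is correct: expanding the definition of $\mathcal{F}_f$, pulling the constant $\varphi(x)$ out of the supremum, and using $\dom\varphi\subset K$ to identify $\sup_{y\in K}\{\langle u,y\rangle-\varphi(y)\}$ with $\varphi^*(u)$ is exactly the intended (and standard) verification; the paper itself states this proposition without proof, citing \cite{CMR}, and your computation, including the remark on the degenerate case $x\notin\dom\varphi$, is precisely what that reference does.
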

%%%%%%%%%%%%%%%%%%%%%%%%%%

\section{The main results}
%%%%%%%%%%%%%%%%%%%%%%%%%%%%%%%%%%%%%%%%%%%%%%%%%%%
In the remaining part of the paper, $f$ and $g$ are two monotone and upper hemicontinuous  bifunctions. We suppose that for  each $y\in K, \partial g_y(y)\neq \emptyset$ (i.e., $\dom (A^{g})=K$) and that $K \cap S_f \neq \emptyset$ and $\R_+ (K-S_f)$ is a closed linear subspace of $H$. In this case, the operator $g_x+\delta_{S_f}$ is maximally monotone, see \cite{Att-Riahi-Thera, Riahi5}, and the subdifferential sum formula $\partial (g_x+\delta_{S_f})=\partial g_x+N_{S_f}$ holds.
%%%%%%%%%%%%%%%%%%%%%%%%%%%%%%%%%%%%%%%%%%%%%%%%%%%
The following geometric assumption will be also needed and considered as a key tool in our treatment of the convergence analysis: $\forall u\in S_f, \;\text{for all}\; p\in N_{S_f}(u)$, 
\begin{equation}\label{fitz-discret}
	%\forall u\in S_f, \forall p\in N_{S_f}(u),\;
	\sum_{n=1}^{+\infty} \lambda_n\beta_n\left[\mathcal{F}_f\left(u,\dfrac{2p}{\beta_n}\right)-\sigma_{S_f}\left(\dfrac{2p}{\beta_n}\right)\right]<+\infty. \end{equation}
Let us mention that hypothesis \eqref{fitz-discret} is the 
discrete counterpart of the condition introduced in \cite{CMR} in the context of continuous-time dynamical equilibrium systems. Note also that it is a natural extension of similar assumptions known in the literature for the convergence analysis of variational inequalities expressed as monotone inclusion problems and for constrained convex optimization problems, see \cite{attouch2011prox, BC1, BC2} and references therein for further useful comments on these assumptions.

\subsection{Weak convergence analysis}

%We first recall the following summability result for real sequences that will be needed in our main results of this section.
%We are now in the position to prove the main result of this section. In order to do so, 

In this paragraph, under natural conditions, we obtain weak convergence result for the trajectory generated by \eqref{algo} to a solution of $(BEP)$. 
We first prove the following preliminary estimation.
\begin{lem}\label{lem5}
Let $\lbrace x_n \rbrace$ be a sequence generated by algorithm \eqref{algo}. Take $u\in S$ and set $a_n:=\|x_n-u\|^2$. Then, there exists $p\in N_{S_f}(u)$ such that for each $n\geq 1$ the following inequality holds:
	\begin{equation}\label{estim1}
		\begin{array}{l}
a_{n+1}-a_n-\alpha(a_n-a_{n-1})+\lambda_n\beta_nf(u,x_{n+1})\\
			\leq(\alpha-1)\|x_{n+1}-x_{n}\|^2 +2\alpha\|x_n-x_{n-1}\|^2	+\lambda_n\beta_n\left[\mathcal{F}_f\left(u,\dfrac{2p}{\beta_n}\right)-\sigma_{S_f}\left(\dfrac{2p}{\beta_n}\right)\right].
				\end{array}
\end{equation}
\end{lem}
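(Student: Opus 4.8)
The plan is to start from the defining inequality \eqref{algo} for $x_{n+1}$ and feed it the test point $y = u \in S \subset S_f$, in order to produce a relation between $a_{n+1}$, $a_n$ and the inner product $\langle x_{n+1}-y_n, u - x_{n+1}\rangle$. Using the three-point (polarization) identity
\[
2\langle x_{n+1}-y_n, u-x_{n+1}\rangle = \|u-y_n\|^2 - \|u-x_{n+1}\|^2 - \|x_{n+1}-y_n\|^2,
\]
and recalling that $y_n = x_n + \alpha(x_n - x_{n-1})$, I would expand $\|u-y_n\|^2$ in terms of $a_n$, $a_{n-1}$ and $\|x_n-x_{n-1}\|^2$ via Lemma \ref{lem5-a} (with $\beta$ chosen suitably, writing $y_n$ as an affine combination of $x_n$ and $2x_n - x_{n-1}$, or just by direct expansion). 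Similarly $\|x_{n+1}-y_n\|^2$ must be related to $\|x_{n+1}-x_n\|^2$ and $\|x_n-x_{n-1}\|^2$; here a Young/Cauchy--Schwarz inequality of the form $\|x_{n+1}-y_n\|^2 \ge (1-\text{something})\|x_{n+1}-x_n\|^2 - (\dots)\|x_n-x_{n-1}\|^2$ will generate the constants $(\alpha-1)$ and $2\alpha$ appearing on the right-hand side of \eqref{estim1}. This bookkeeping is routine but must be done carefully to land exactly on those coefficients.

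The substantive step is handling the bifunction terms. Plugging $y=u$ into \eqref{algo} gives $\beta_n f(x_{n+1},u) + g(x_{n+1},u) + \frac{1}{\lambda_n}\langle x_{n+1}-y_n, u-x_{n+1}\rangle \ge 0$. I need to convert $f(x_{n+1},u)$ and $g(x_{n+1},u)$ into something usable. For $g$: since $u \in S$ solves $(BEP)$, $g(u,y)\ge 0$ for all $y \in S_f$; combined with monotonicity of $g$ this does not immediately bound $g(x_{n+1},u)$ because $x_{n+1}\notin S_f$ in general. Instead I expect to use that $u \in S$ means $0 \in A^g(u) + N_{S_f}(u)$ (this is exactly the maximal monotonicity / sum rule quoted just before the lemma: $\partial(g_u + \delta_{S_f})(u) = A^g(u) + N_{S_f}(u)$, and $u$ minimizes $g_u + \delta_{S_f}$ over... ), so there is $p \in N_{S_f}(u)$ with $-p \in A^g(u)$, i.e. $g(u,y) + \langle -p, u - y\rangle \ge 0$, equivalently $g(u,y) \ge \langle p, u-y\rangle$ for all $y \in K$. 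Monotonicity of $g$ then yields $g(x_{n+1},u) \le -g(u,x_{n+1}) \le -\langle p, u - x_{n+1}\rangle = \langle p, x_{n+1}-u\rangle$. This is the source of the vector $p$ in the statement, and this is the step I expect to be the main obstacle — getting the existence of $p$ and the correct sign conventions right, and making sure it is the \emph{same} $p$ that appears in the Fitzpatrick term.

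For the $f$ term, monotonicity gives $f(x_{n+1},u) \le -f(u,x_{n+1})$, so $\beta_n f(x_{n+1},u)$ contributes $-\beta_n f(u,x_{n+1})$, which after multiplying the whole inequality by $\lambda_n$ accounts for the $+\lambda_n\beta_n f(u,x_{n+1})$ on the left of \eqref{estim1}. Finally, the term $\langle p, x_{n+1}-u\rangle$ coming from $g$ must be absorbed into the Fitzpatrick expression: by definition $\mathcal{F}_f(u, \tfrac{2p}{\beta_n}) = \sup_{y\in K}\{\langle \tfrac{2p}{\beta_n}, y\rangle + f(y,u)\} \ge \langle \tfrac{2p}{\beta_n}, x_{n+1}\rangle + f(x_{n+1},u)$, while $\sigma_{S_f}(\tfrac{2p}{\beta_n}) = \langle \tfrac{2p}{\beta_n}, u\rangle$ since $p \in N_{S_f}(u)$; subtracting gives $\mathcal{F}_f(u,\tfrac{2p}{\beta_n}) - \sigma_{S_f}(\tfrac{2p}{\beta_n}) \ge \langle \tfrac{2p}{\beta_n}, x_{n+1}-u\rangle + f(x_{n+1},u)$, i.e. $\tfrac{2}{\beta_n}\langle p, x_{n+1}-u\rangle \le \mathcal{F}_f(u,\tfrac{2p}{\beta_n}) - \sigma_{S_f}(\tfrac{2p}{\beta_n}) - f(x_{n+1},u)$. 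Scaling by $\tfrac{\lambda_n\beta_n}{2}$ and combining with the $f$ estimate turns the stray $g$-contribution into exactly the bracketed Fitzpatrick term $\lambda_n\beta_n[\mathcal{F}_f(u,\tfrac{2p}{\beta_n}) - \sigma_{S_f}(\tfrac{2p}{\beta_n})]$ on the right of \eqref{estim1}. Collecting all pieces — the polarization identity, the two norm expansions, the $f$-monotonicity, the $p$ from $u\in S$, and the Fitzpatrick bound — and rearranging yields \eqref{estim1}. The factor of $2$ in $\tfrac{2p}{\beta_n}$ and the precise split of $\|x_{n+1}-y_n\|^2$ are the two places where I would double-check the constants.
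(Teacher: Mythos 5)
Your proposal is correct and follows essentially the same route as the paper's proof: test \eqref{algo} at $u$, use the polarization identity together with Lemma \ref{lem5-a} and a Young-type bound on $\|x_{n+1}-y_n\|^2$ to get the $(\alpha-1)$ and $2\alpha$ coefficients, obtain $p\in N_{S_f}(u)$ with $-p\in A^g(u)$ from $0\in\partial(g_u+\delta_{S_f})(u)=A^g(u)+N_{S_f}(u)$, and split the $f$-term so that half is converted by monotonicity into $+\lambda_n\beta_n f(u,x_{n+1})$ while the other half is absorbed with $\langle p,x_{n+1}-u\rangle$ into $\mathcal{F}_f\bigl(u,\tfrac{2p}{\beta_n}\bigr)-\sigma_{S_f}\bigl(\tfrac{2p}{\beta_n}\bigr)$. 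The only difference is cosmetic: the paper first records the inequality for arbitrary $x\in K$ (reused later) before setting $x=u$, and your scaling factors work out once the global factor $2$ is fixed, as you yourself flag.
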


\begin{proof}
Since $\{ x_n\}$ is generated by algorithm \eqref{algo}, we have for each $x\in K$
\begin{equation}\label{15a}
\begin{array}{l}
0\leq  \lambda_n \beta_n f(x_{n +1} ,x)+\lambda_n g(x_{n +1} ,x) \\
\qquad+\dfrac{1}{2}\left(\|y_n-x\|^2-\|x_{n+1}-x\|^2 -\|x_{n+1}-y_n\|^2\right).
\end{array}
%\begin{array}{rcl}
%0\leq  \lambda_n \beta_n f(x_{n +1} ,x)+\lambda_n g(x_{n +1} ,x) \\+ \dfrac{1}{2}\left(\|y_n-x\|^2-\|x_{n+1}-x\|^2 -\|x_{n+1}-y_n\|^2\right).
%\end{array}
\end{equation}
By Lemma \ref{lem5-a}, we have for all $n\geq 1$   	\begin{align}\label{8a}
\|y_n-x\|^2&=\|x_n+\alpha(x_n-x_{n-1})-x\|^2 \nonumber\\
&=\|(1+\alpha)(x_n-x)-\alpha(x_{n-1}-x)\|^2\nonumber\\
&=(1+\alpha)\|x_n-x\|^2-\alpha \|x_{n-1}-x\|^2+\alpha(1+\alpha)\|x_n-x_{n-1}\|^2.
\end{align}
Also, we have 	
\begin{equation}\label{8b}
\begin{array}{lll}
\|x_{n+1}-y_n\|^2& = & \|x_{n+1}-x_n-\alpha(x_n-x_{n-1})\|^2 \\ 
& = & \|x_{n+1}-x_n\|^{2}+\alpha^2\|x_{n}-x_{n-1}\|^{2}-2\alpha\langle x_{n+1}-x_n,x_{n}-x_{n-1}\rangle\\
& \geq &(1-\alpha)\|x_{n+1}-x_n\|^{2}+(\alpha^2-\alpha)\|x_{n}-x_{n-1}\|^{2}.
\end{array} 
\end{equation}	
Combining \eqref{8a} and \eqref{8b} with \eqref{15a}, we get for every $x\in K$
\begin{equation}\label{15principal}
\begin{array}{l}
\|x_{n+1}-x\|^{2}-(1+\alpha)\|x_{n}-x\|^{2}+\alpha\|x_{n-1}-x\|^{2}\\ \leq  (\alpha-1)\|x_{n+1}-x_{n}\|^2+2\alpha\|x_n-x_{n-1}\|^2+2\lambda_n \beta_n f(x_{n +1},x)+2\lambda_n g(x_{n +1} ,x).
\end{array}
\end{equation}
Since $u\in S$, according to the first-order optimality condition, we have $$0\in \partial(g_u+\delta_{S_f})(u)=A^g(u)+N_{S_f}(u).$$ Let $p\in N_{S_f}(u)$ be such that $-p\in A^g(u)$, we have for every $n\geq 1$
\begin{equation}\label{g1}
\lambda_ng(u,x_{n+1})+\lambda_n\langle -p,u-x_{n+1}\rangle \geq 0,
\end{equation} 
and by taking $x=u$ and $a_n=\|x_n-u\|^{2}$ in \eqref{15principal}, we also have
\begin{equation}\label{15c}
\begin{array}{rcl}
a_{n+1}-(1+\alpha)a_n+\alpha a_{n-1}&\leq & (\alpha-1)\|x_{n+1}-x_{n}\|^2+2\alpha\|x_n-x_{n-1}\|^2\\
&&+2\lambda_n \beta_n f(x_{n +1},u)+2\lambda_n g(x_{n +1} ,u).
\end{array}
\end{equation}
By summing up the above inequalities and using the monotonicity of $g$, we get
\begin{align*}
a_{n+1}-a_n-\alpha(a_n-a_{n-1})&\leq(\alpha-1)\|x_{n+1}-x_{n}\|^2+2\alpha\|x_n-x_{n-1}\|^2\\
&\quad+2\lambda_n\beta_n f(x_{n+1},u)+2\lambda_n\langle -p,u-x_{n+1}\rangle.
\end{align*} 
Using the monotonicity of $f$, we obtain
\begin{equation*}
	\begin{array}{l}
		a_{n+1}-a_n-\alpha(a_n-a_{n-1})+\lambda_n\beta_n f(u,x_{n+1})\\\leq(\alpha-1)\|x_{n+1}-x_{n}\|^2+2\alpha\|x_n-x_{n-1}\|^2
		+\lambda_n\beta_n f(x_{n+1},u)+2\lambda_n\langle -p,u-x_{n+1}\rangle\\
		=(\alpha-1)\|x_{n+1}-x_{n}\|^2+2\alpha\|x_n-x_{n-1}\|^2\\\quad\quad+\lambda_n\beta_n\left[\left\langle \frac{2p}{\beta_n},x_{n+1}\right \rangle+f(x_{n+1},u)-\left\langle \frac{2p}{\beta_n},u\right \rangle \right].
	\end{array}
\end{equation*}
Finally, using the fact that $p\in N_{S_f}(u)$, i.e., $\delta_{S_f}(u)+\sigma_{S_f}(p)=\langle p,u\rangle$, we obtain
\begin{equation*}
	\begin{array}{lll}
a_{n+1}-a_n-\alpha(a_n-a_{n-1})+\lambda_n\beta_n f(u,x_{n+1})&&\\
		\leq(\alpha-1)\|x_{n+1}-x_{n}\|^2+2\alpha\|x_n-x_{n-1}\|^2&&\\
		\qquad+\lambda_n\beta_n\left[\sup_{x\in H} \left \lbrace \left\langle \frac{2p}{\beta_n},x\right \rangle+f(x,u)\right \rbrace-\sigma_{S_f}\left(\frac{2p}{\beta_n}\right) \right]&&\\
		=(\alpha-1)\|x_{n+1}-x_{n}\|^2+2\alpha\|x_n-x_{n-1}\|^2\\
		\qquad+\lambda_n\beta_n\left[\mathcal{F}_f\left(u,\dfrac{2p}{\beta_n}\right)-\sigma_{S_f}\left(\dfrac{2p}{\beta_n}\right)\right].
	\end{array}
\end{equation*}

The proof is complete.
\end{proof}

\begin{rem}
We can continue our analysis assuming that $\displaystyle \sum_{n=1}^{+\infty}\|x_n-x_{n-1}\|^2<+\infty$; however this condition involves the trajectory $\{x_n\}$ which is unknown. 
In the next corollary we prove that the above condition holds under a suitable control of the parameter $\alpha$.
%Note that a similar assumption was imposed in \cite{moud1} in the framework of solving equilibrium problem $(EP)$ by an approximate second order procedure.  
%The main advantage of our approach is that it provides convergence without any restrictive assumption on the trajectories. 
\end{rem}

\begin{coro}\label{coro-disc}
Under hypothesis \eqref{fitz-discret} and by assuming that $0\leq \alpha< \frac{1}{3}$, we have 
\begin{itemize}
	\item [(i)] $\displaystyle \sum_{n=1}^{+\infty}\|x_n-x_{n-1}\|^2<+\infty$;
	\item [(ii)] $\displaystyle\sum_{n=1}^{+\infty}\lambda_n\beta_nf(u,x_{n+1})<+\infty$,\; for each $u\in S$.
\end{itemize}
\end{coro}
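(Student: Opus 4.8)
The plan is to feed the estimate \eqref{estim1} of Lemma \ref{lem5} into the telescoping/summation machinery provided by Lemma \ref{lem-sum}. Introduce the nonnegative quantity $\vv_n := a_n - a_{n-1} + 2\alpha\|x_n - x_{n-1}\|^2$ (or a similar Lyapunov-type combination chosen so that the inertial terms on the right of \eqref{estim1} can be absorbed); the point is to rewrite \eqref{estim1} in the form $\vv_{n+1} \le \vv_n - c\,\|x_{n+1}-x_n\|^2 - \lambda_n\beta_n f(u,x_{n+1}) + \lambda_n\beta_n\big[\mathcal F_f(u,\tfrac{2p}{\beta_n}) - \sigma_{S_f}(\tfrac{2p}{\beta_n})\big]$ for a constant $c = c(\alpha) > 0$, where the condition $\alpha < \tfrac13$ is exactly what is needed to make $c$ positive (one should track the coefficients: $\alpha - 1$ from $-\|x_{n+1}-x_n\|^2$, against $2\alpha$ from the $\|x_n-x_{n-1}\|^2$ term shifted by one index, giving roughly $1 - 3\alpha > 0$). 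I would first verify this algebraic regrouping carefully, since it is the crux of the argument.

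Next I would handle the sign of $f(u,x_{n+1})$. Since $u \in S \subset S_f$ solves the lower-level problem $(EP)$ for $f$, we have $f(u,y) \ge 0$ for all $y \in K$, in particular $f(u,x_{n+1}) \ge 0$; hence $\lambda_n\beta_n f(u,x_{n+1}) \ge 0$ and this term helps rather than hurts. So from the regrouped inequality, $\vv_{n+1} \le \vv_n + \lambda_n\beta_n\big[\mathcal F_f(u,\tfrac{2p}{\beta_n}) - \sigma_{S_f}(\tfrac{2p}{\beta_n})\big]$ after dropping the two nonnegative contributions on the right; summing and using hypothesis \eqref{fitz-discret}, the sequence $\vv_n$ stays bounded above (indeed its positive part is summable in the telescoped sense). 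To then extract square-summability of $\|x_{n+1}-x_n\|^2$ and of $\lambda_n\beta_n f(u,x_{n+1})$, I would either sum the full inequality and bound $\sum (a_n - a_{n-1})$ by noting $a_n \ge 0$, or — cleaner — apply Lemma \ref{lem-sum}: setting $b_k$ to be (a multiple of) $\|x_{k+1}-x_k\|^2$ won't directly fit, so instead one shows that $\vv_n$ is bounded below (this needs $a_n \ge 0$ and controlling the inertial term, again via $\alpha < \tfrac13$), whence the telescoped sum $\sum \big(\vv_n - \vv_{n+1}\big)$ converges, and since each summand dominates $c\|x_{n+1}-x_n\|^2 + \lambda_n\beta_n f(u,x_{n+1}) - \lambda_n\beta_n[\mathcal F_f - \sigma_{S_f}]$ with the bracket summable by \eqref{fitz-discret}, both $(i)$ and $(ii)$ follow at once.

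The main obstacle I anticipate is not any single inequality but the bookkeeping needed to show $\vv_n$ is bounded from below: $a_n \ge 0$ controls one piece, but the quantity $a_n - a_{n-1}$ can a priori be very negative, so one must argue — as is standard in inertial/Opial arguments — that $a_{n+1} - a_n \le \alpha(a_n - a_{n-1}) + (\text{summable})$, so that $[a_{n+1}-a_n]_+ \le \alpha[a_n - a_{n-1}]_+ + (\text{summable terms})$, and then invoke Lemma \ref{lem-sum} with $p = \alpha < 1$ to get $\sum [a_n - a_{n-1}]_+ < +\infty$. This last step is precisely where the structure of Lemma \ref{lem-sum} is used, and where $\alpha$ being strictly below $1$ (and the refinement to $\tfrac13$ for the $\|x_{n+1}-x_n\|^2$ coefficient) is essential. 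Once the one-sided summability of the increments $a_n - a_{n-1}$ is in hand, reinserting it into \eqref{estim1} and summing yields the two claimed series bounds directly.
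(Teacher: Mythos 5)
Your regrouping of \eqref{estim1} is essentially correct and is the same algebra the paper performs: with the Lyapunov quantity $E_n:=a_n-\alpha a_{n-1}+2\alpha\delta_n$, where $\delta_n:=\|x_n-x_{n-1}\|^2$ (note it must be $a_n-\alpha a_{n-1}$, not $a_n-a_{n-1}$ as you first wrote, for the inertial terms to cancel), one gets $E_{n+1}\le E_n-(1-3\alpha)\delta_{n+1}-\lambda_n\beta_n f(u,x_{n+1})+\lambda_n\beta_n\bigl[\mathcal{F}_f(u,2p/\beta_n)-\sigma_{S_f}(2p/\beta_n)\bigr]$, which is exactly the paper's rearrangement \eqref{soleil}, and your observation that $f(u,x_{n+1})\ge 0$ because $u\in S\subset S_f$ is also the paper's. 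The genuine gap is in the mechanism you propose for bounding the telescoped quantity from below. You suggest controlling the possibly very negative increments by proving $[a_{n+1}-a_n]_+\le\alpha[a_n-a_{n-1}]_+ +(\text{summable})$ and invoking Lemma \ref{lem-sum} with $p=\alpha$; but the ``summable'' remainder in that inequality necessarily contains the term $2\alpha\|x_n-x_{n-1}\|^2$, whose summability is precisely item (i) you are trying to establish. For $\alpha>0$ this is circular. (The positive-part argument with Lemma \ref{lem-sum} is indeed used in the paper, but only in Theorem \ref{thm-weak}, \emph{after} Corollary \ref{coro-disc}(i) has supplied $\sum_n\|x_n-x_{n-1}\|^2<+\infty$.)

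The non-circular way to close the loop --- and what the paper does --- is to extract boundedness of $a_n$ from the same telescoped inequality before any positive-part manipulation: summing \eqref{soleil} from $j=1$ to $n$ and using \eqref{fitz-discret}, then discarding the two nonnegative terms $(1-\alpha)\delta_{n+1}$ and $(1-3\alpha)\sum_{j\le n}\delta_j$ (here $0\le\alpha<\tfrac13$ enters), yields the recursion $a_{n+1}\le\alpha a_n+C$ as in \eqref{C2}; iterating this geometric recursion bounds $a_n$, hence $\delta_n$, and returning to the summed inequality \eqref{C1} gives $(1-3\alpha)\sum_{j=1}^{n}\delta_j\le C+\alpha a_n$, which is bounded, proving (i). Your derivation of (ii) is fine once (i) is in hand: drop $(\alpha-1)\delta_{n+1}\le 0$, sum \eqref{estim1}, and use \eqref{fitz-discret}, item (i), and the boundedness of $a_n$ to control the telescoped boundary term $a_{N+1}-\alpha a_N$. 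So the architecture of your argument matches the paper's, but the step you identify yourself as the main obstacle is resolved by the wrong tool; it must be replaced by the recursion $a_{n+1}\le\alpha a_n+C$ (or an equivalent a priori bound on $a_n$), not by Lemma \ref{lem-sum} applied to $[a_{n+1}-a_n]_+$.
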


\begin{proof}
(i)  First we simplify the writing of the estimation \eqref{estim1} given in Lemma \ref{lem5}. Since $u\in S_f$ and $\lambda_n\beta_n\geq 0$, we have $\lambda_n\beta_nf(u,x_{n+1})\geq 0$. Setting $\delta_n=\|x_n-x_{n-1}\|^2$, then inequality \eqref{estim1} gives
\begin{equation}\label{pluie}
a_{n+1}-a_n-\alpha(a_n-a_{n-1})+(1-\alpha)\delta_{n+1}-2\alpha\delta_n\leq\lambda_n\beta_n\left[\mathcal{F}_f\left(u,\dfrac{2p}{\beta_n}\right)-\sigma_{S_f}\left(\dfrac{2p}{\beta_n}\right)\right]
.
\end{equation}
 In order to simplify its summation  we rewrite  (\ref{pluie}) as
\begin{equation}\label{soleil}
\begin{array}{l}
a_{n+1}-a_n-\alpha(a_n-a_{n-1})+(1-\alpha)(\delta_{n+1}-\delta_n)+(1-3\alpha)\delta_n\\
\leq\lambda_n\beta_n\left[\mathcal{F}_f\left(u,\dfrac{2p}{\beta_n}\right)-\sigma_{S_f}\left(\dfrac{2p}{\beta_n}\right)\right]
.
\end{array}
\end{equation}

Now, summing up (\ref{soleil})  from $j=1$ to $n$, we obtain   
$$
\begin{array}{l}
(a_{n+1}-a_1)-\alpha(a_n-a_0)+(1-\alpha)(\delta_{n+1}-\delta_1)+(1-3\alpha)\sum_{j=1}^n\delta_j\\
\leq \sum_{j=1}^n\lambda_j\beta_j\left[\mathcal{F}_f\left(u,\dfrac{2p}{\beta_j}\right)-\sigma_{S_f}\left(\dfrac{2p}{\beta_j}\right)\right].
\end{array}
$$  
Assumption \eqref{fitz-discret}, infers that
\begin{equation}\label{C1}
(a_{n+1}-\alpha a_n)+(1-\alpha)\delta_{n+1}+(1-3\alpha)\sum_{j=1}^n\delta_j\leq C
,\end{equation}
for some nonnegative constant $C$.\\
Since $\alpha <\frac{1}{3}$ yields $1-3\alpha>0$ and $1-\alpha>0$, then inequality \eqref{C1} implies for all $n\geq 1$
\begin{equation}\label{C2}
a_{n+1}\leq \alpha a_n+C. \end{equation}
Recursively we obtain for all $n\geq n_0\geq 1$
\begin{align*}
a_{n+1}&\leq \alpha^{n-n_0}a_{n_0}+C(1+\alpha+\alpha^2+...+\alpha^{n-n_0-1})
\\
&=\alpha^{n-n_0}a_{n_0}+C\frac{1-\alpha^{n-n_0}}{1-\alpha}.
\end{align*}
%which implies that $\displaystyle \sup_na_n<+\infty$. 
Therefore the sequence $\{x_n\}$ is bounded and since 
\begin{equation}\label{C4}
	\sup_n\|x_{n+1}-x_n\|\leq 2\sup_n\|x_n\| <+\infty,
	\end{equation} the sequence $\{\delta_n\}$ is also bounded.
%\begin{equation}\label{C4}
%	\sup_n\|x_{n+1}-x_n\|\leq 2\sup_n\|x_n\| <+\infty,
%	\end{equation}
Combining \eqref{C4}  with \eqref{C1} and noticing that $1-3\alpha>0,$ yields  
$$\sum_{j=1}^{+\infty}\delta_j<+\infty,$$
ensuring $(i).$ \\
Returning to inequality \eqref{estim1}, we have
\begin{align*}
a_{n+1}-a_n-\alpha(a_n-a_{n-1})+\lambda_n\beta_nf(u,x_{n+1})&\leq \lambda_n\beta_n\left[\mathcal{F}_f\left(u,\dfrac{2p}{\beta_n}\right)-\sigma_{S_f}\left(\dfrac{2p}{\beta_n}\right)\right]\\&\qquad+\underbrace{(\alpha-1)}_{\leq 0}\delta_{n+1}+2\alpha\delta_n.\\
&\leq\lambda_n\beta_n\left[\mathcal{F}_f\left(u,\dfrac{2p}{\beta_n}\right)-\sigma_{S_f}\left(\dfrac{2p}{\beta_n}\right)\right]\\&\qquad+2\alpha\delta_n.\
\end{align*}
By summing up from $n=1$ to $+\infty$, we obtain
\begin{align*}
\sum_{n=1}^{+\infty} \lambda_n\beta_nf(u,x_{n+1})&\leq a_1-\alpha a_0+\sum_{n=1}^{+\infty} \lambda_n\beta_n\left[\mathcal{F}_f\left(u,\dfrac{2p}{\beta_n}\right)-\sigma_{S_f}\left(\dfrac{2p}{\beta_n}\right)\right]\\&\qquad+2\alpha\sum_{n=1}^{+\infty}\|x_n-x_{n-1}\|^2.
\end{align*}
Then, assumptions \eqref{fitz-discret} and $(i)$ ensure $(ii)$. 
\end{proof}
%In order to do so, we first recall two useful lemmas.

In order to further proceed with the convergence analysis, we have to choose the sequences $\{\lambda_n\}$ and $\{\beta_n\}$ such that $\displaystyle \liminf_{n\rightarrow +\infty}\lambda_n>0$ and $\beta_n\rightarrow +\infty$. 
We are now able to state and prove the first main result of this section. 

\begin{theo}\label{thm-weak}
Suppose given monotone and upper hemicontinuous bifunctions $f$ and $g$. Let $\lbrace x_n \rbrace$ be a sequence generated by algorithm \eqref{algo}.
Under hypothesis \eqref{fitz-discret} and by assuming that 
\begin{center}
$0\leq \alpha< \frac{1}{3}$, \; $\displaystyle \liminf_{n\rightarrow +\infty}\lambda_n>0$ \; and \; $\displaystyle \lim_{n\rightarrow +\infty}\beta_{n}=+\infty$,	
\end{center} 
the sequence $\lbrace x_n \rbrace$ weakly converges to $\bar{x}\in S$.
\end{theo}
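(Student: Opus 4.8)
The plan is to verify the two hypotheses of the discrete Opial Lemma (Lemma \ref{disc-opial}) for the sequence $\{x_n\}$ and the solution set $S$, and then invoke that lemma to conclude weak convergence to a point of $S$. The work of Lemma \ref{lem5} and Corollary \ref{coro-disc} has already done most of the heavy lifting: from Corollary \ref{coro-disc}(i) we have $\sum_n \delta_n = \sum_n \|x_n - x_{n-1}\|^2 < +\infty$, and in particular $\|x_n - x_{n-1}\| \to 0$, so $y_n - x_n \to 0$ as well; the sequence $\{x_n\}$ is bounded (already shown in the proof of Corollary \ref{coro-disc}), so it has weak cluster points.

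First I would establish condition (i) of Opial, namely that $\lim_n \|x_n - u\|$ exists for every $u \in S$. Fix $u \in S$ and let $a_n = \|x_n - u\|^2$ and $p \in N_{S_f}(u)$ be as in Lemma \ref{lem5}. Starting from inequality \eqref{estim1}, drop the nonnegative term $\lambda_n\beta_n f(u,x_{n+1})$ and the nonpositive term $(\alpha-1)\delta_{n+1}$ to obtain $a_{n+1} - a_n - \alpha(a_n - a_{n-1}) \le 2\alpha\,\delta_n + \lambda_n\beta_n[\mathcal{F}_f(u, 2p/\beta_n) - \sigma_{S_f}(2p/\beta_n)]$. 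Writing $b_n := a_n - a_{n-1}$ and $\theta_n := [b_n]_+$, a standard argument (as in Alvarez--Attouch type proofs) using $\alpha < 1 < 3$, together with the summability of $\{\delta_n\}$ from Corollary \ref{coro-disc}(i) and the summability of the $\mathcal{F}_f$-terms from hypothesis \eqref{fitz-discret}, shows that $\sum_n [b_{n+1} - \alpha b_n]_+ < +\infty$; applying Lemma \ref{lem-sum} to the nonnegative sequence $\theta_n$ yields $\sum_n \theta_n < +\infty$. Hence the sequence $a_n + \sum_{k\ge n}(\text{tail terms})$ is, up to a summable correction, nonincreasing and bounded below, so $\lim_n a_n$ exists, giving condition (i).

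Next I would establish condition (ii): every weak cluster point of $\{x_n\}$ lies in $S$. Let $x_{n_k} \rightharpoonup \bar x$. Since $\{x_n\}$ is bounded and $\|x_{n+1} - x_n\| \to 0$, also $x_{n_k+1} \rightharpoonup \bar x$ and $y_{n_k} \rightharpoonup \bar x$. From the algorithm \eqref{algo}, for all $y \in K$, $\beta_n f(x_{n+1},y) + g(x_{n+1},y) + \frac{1}{\lambda_n}\langle x_{n+1} - y_n, y - x_{n+1}\rangle \ge 0$. Dividing by $\beta_n \to +\infty$ and using boundedness together with $\liminf \lambda_n > 0$, the $g$-term and the inner-product term vanish in the limit, and using upper hemicontinuity/monotonicity of $f$ (via Minty's Lemma \ref{lem-Minty}) in the standard way one derives $f(\bar x, y) \ge 0$ for all $y \in K$, i.e. $\bar x \in S_f$. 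Then, to get $g(\bar x, y) \ge 0$ for all $y \in S_f$: using monotonicity of $g$ and $f$ in \eqref{algo} one shows, for any $u \in S$, that $\lambda_n g(x_{n+1}, u) + \lambda_n\langle -p, u - x_{n+1}\rangle + \lambda_n\beta_n f(u, x_{n+1}) \le$ (summable terms) as in Lemma \ref{lem5}; combining with Corollary \ref{coro-disc}(ii) and the behaviour of $p \in N_{S_f}(\bar x)$, passing to the limit along $n_k$ and invoking Minty's Lemma for $g$ on $S_f$ gives $\bar x \in S$.

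The main obstacle is the limit-passage in condition (ii): one must carefully handle the product $\beta_n f(x_{n+1}, y)$, which is an indeterminate form since $\beta_n \to +\infty$ while $f(x_{n+1}, y)$ may tend to $0$. The clean route is to divide \eqref{algo} by $\beta_n$, use convexity and lower semicontinuity of $f(\cdot, y)$ together with weak-to-strong lower semicontinuity arguments, and apply Minty's Lemma (Lemma \ref{lem-Minty}) to convert the dual-type inequality $\limsup_k f(y, x_{n_k+1}) \le 0$ back into $f(\bar x, y) \ge 0$; the monotonicity of $f$ is precisely what makes this transition work. Once both Opial conditions are verified, Lemma \ref{disc-opial} immediately gives that $\{x_n\}$ converges weakly to some $\bar x \in S$, completing the proof.
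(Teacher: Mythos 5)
Your overall strategy is the same as the paper's: verify the two conditions of the discrete Opial Lemma (Lemma \ref{disc-opial}) with $C=S$, using Lemma \ref{lem5}, Corollary \ref{coro-disc} and Lemma \ref{lem-sum} to obtain the existence of $\lim_n\|x_n-u\|$, and Minty's Lemma to identify weak cluster points; your treatment of Opial's condition (i) is essentially identical to the paper's. There are, however, two points in the cluster-point identification where your sketch has a real gap. The lesser one: after dividing \eqref{algo} by $\beta_n$, the claim that ``the $g$-term vanishes by boundedness'' is not justified by boundedness of the iterates alone. You must first flip the arguments by monotonicity of $g$, so that the term to control is $-\beta_{n_k}^{-1}g(y,x_{n_k+1})$, and then invoke the standing assumption $\partial g_y(y)\neq\emptyset$, which gives $-g(y,z)\le\gamma(y)\|y-z\|$ for all $z\in K$; only then does this term tend to $0$. (Also, the weak lower semicontinuity you need to pass to the limit is that of $f(y,\cdot)$, i.e.\ in the second argument, not of $f(\cdot,y)$.)

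The more serious gap is in the upper-level step. To invoke Minty's Lemma for $g$ on $S_f$ and conclude $g(\bar x,u)\ge 0$ for all $u\in S_f$, you need the dual inequality $g(u,\bar x)\le 0$ for \emph{every} $u\in S_f$. But the inequality you propose (containing $\lambda_n\langle -p,u-x_{n+1}\rangle$ with $p\in N_{S_f}(u)$, and Corollary \ref{coro-disc}(ii)) is only available for $u\in S$, since it rests on the optimality condition $0\in A^g(u)+N_{S_f}(u)$ used in Lemma \ref{lem5} (note also that $p$ lies in $N_{S_f}(u)$, not $N_{S_f}(\bar x)$). Having $g(u,\bar x)\le 0$ only at points of $S$ does not feed Minty's Lemma on $S_f$, so as written the conclusion $\bar x\in S$ does not follow. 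The repair is simpler than your route and is what the paper does: for an arbitrary $u\in S_f$, take $y=u$ in \eqref{algo} and use monotonicity of both $f$ and $g$ to get $\lambda_n\beta_n f(u,x_{n+1})+\lambda_n g(u,x_{n+1})\le\langle y_n-x_{n+1},x_{n+1}-u\rangle$; the right-hand side tends to $0$ because $\|x_{n+1}-y_n\|\to 0$ and $\{x_n\}$ is bounded, while $f(u,x_{n+1})\ge 0$ since $u\in S_f$, so with $\liminf_n\lambda_n>0$ and weak lower semicontinuity of $g(u,\cdot)$ you obtain $g(u,\bar x)\le 0$ for all $u\in S_f$, and then Minty's Lemma applies; no normal vector $p$ and no appeal to Corollary \ref{coro-disc}(ii) are needed at this stage.
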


\begin{proof}
The proof relies on the discrete Opial Lemma. To this end we will prove
that the conditions (i) and (ii) in Lemma \ref{disc-opial} for $C=S$ are satisfied.\\
Returning to inequality \eqref{estim1}, since $u\in S_f$ and $\lambda_n\beta_n\geq 0$, we have $\lambda_n\beta_nf(u,x_{n+1})\geq 0$, and then 
\begin{equation*}
a_{n+1}-a_n\leq\alpha(a_n-a_{n-1}) +2\alpha\|x_n-x_{n-1}\|^2+\lambda_n\beta_n\left[\mathcal{F}_f\left(u,\dfrac{2p}{\beta_n}\right)-\sigma_{S_f}\left(\dfrac{2p}{\beta_n}\right)\right]
.
\end{equation*}
Taking the positive part, we immediately deduce that
\begin{equation*}
[a_{n+1}-a_n]_+\leq\alpha[a_n-a_{n-1}]_+ +2\alpha\|x_n-x_{n-1}\|^2 +\lambda_n\beta_n\left[\mathcal{F}_f\left(u,\dfrac{2p}{\beta_n}\right)-\sigma_{S_f}\left(\dfrac{2p}{\beta_n}\right)\right].
\end{equation*}
Using assumption \eqref{fitz-discret} together with the fact that $\displaystyle \sum_{n=1}^{+\infty}\|x_n-x_{n-1}\|^2<+\infty$ and applying Lemma \ref{lem-sum} with $$b_n=[a_n-a_{n-1}]_+ \;\text{and}\;w_n=2\alpha\|x_n-x_{n-1}\|^2 +\lambda_n\beta_n\left[\mathcal{F}_f\left(u,\dfrac{2p}{\beta_n}\right)-\sigma_{S_f}\left(\dfrac{2p}{\beta_n}\right)\right],$$ we obtain    
$$\sum_{n=1}^{+\infty}[a_n-a_{n-1}]_+<+\infty.$$ 
Since $a_n$ is nonnegative, this implies the existence of $\displaystyle \lim_{n\rightarrow +\infty} a_n$ and the one of \\$\displaystyle \lim_{n\rightarrow +\infty}\|x_n-u\|$.

It remains to show that every weak cluster point $\bar{x}$ of the sequence $\lbrace x_n \rbrace$ lies in $S$.  
Let $n_k\rightarrow +\infty$ as $k\rightarrow +\infty$ such that $x_{n_k}\rightharpoonup \bar{x}$. We want to show that $\bar{x}\in S$.
Thanks to the monotonicity of $f$ and $g$, inequality \eqref{algo} ensures that for all $y\in K$ and for all $k$ large enough 
\begin{equation}\label{13}
f(y,x_{{n_k}+1})\leq -\frac{1}{\beta_{n_k}}g(y,x_{{n_k}+1})+\frac{1}{\lambda_{n_k}\beta_{n_k}}\langle x_{{n_k}+1}-y_{n_k}, y-x_{{n_k}+1}\rangle.
\end{equation}
%Using that $\beta_n\rightarrow +\infty$, $g(y,.)$ $\displaystyle \liminf_{n\rightarrow +\infty}\lambda_n\beta_n>0$, $\|x_{n+1}-y_n\|\rightarrow 0,$
Since $\partial g_y(y)\neq \emptyset$, one can find $x^*(y)\in H$ such that for every $z\in K$
$$
g(y,z)\geq \langle x^*(y),z-y\rangle \geq -\Vert x^*(y) \Vert\cdot\Vert y-z \Vert.
$$ 
Thus there exists $\gamma(y):=\Vert x^*(y) \Vert>0$ such that for every $z\in K$
\begin{equation}\label{14}
-g(y,z)\leq \gamma(y).\Vert y-z \Vert.
\end{equation}
Returning to \eqref{13}, we can write 
\begin{equation*}
f(y,x_{n_k+1})\leq \dfrac{\gamma(y)}{\beta_{n_k}}\Vert y-x_{n_k+1} \Vert+\dfrac{1}{\lambda_{n_k}\beta_{n_k}}\Vert x_{n_k+1}-y_{n_k} \Vert.\Vert y-x_{n_k+1} \Vert.
\end{equation*}
Passing to the limit, and using the facts that $\{x_{n_k}\}$ is bounded, $\{\beta_{n_k}\}\rightarrow +\infty$, $\displaystyle \liminf_{k\rightarrow +\infty}\lambda_{n_k}>0$ and $\|x_{n_k+1}-y_{n_k}\|\rightarrow 0$, we deduce that $f(y,\bar{x})\leq 0$ for all $y\in K$. Lemma \ref{lem-Minty} leads to $\bar{x}\in S_f$.

By using \eqref{algo} and the monotonicity of $f$ and $g$, we have for every $u\in S_f$,
\begin{equation*}
\lambda_n\beta_n f(u,x_{n+1})+\lambda_ng(u,x_{n+1})\leq \langle y_n-x_{n+1}, x_{n+1}-u\rangle.
\end{equation*}
By exploiting that $\displaystyle \lim_{n\rightarrow +\infty}\|x_n-u\|$ exists and thanks to $(i)$ of Corollary \ref{coro-disc}, we deduce that 
\begin{equation*}
\displaystyle \langle y_n-x_{n+1}, x_{n+1}-u\rangle \rightarrow_{n\rightarrow +\infty} 0.
\end{equation*}
Using $(ii)$ of the same Corollary, we obtain that $\displaystyle \limsup_{n\rightarrow +\infty}\lambda_ng(u,x_{n+1})\leq 0$. Since $g(u,.)$ is lower semicontinuous, from the assumption $\displaystyle \liminf_{n\rightarrow +\infty}\lambda_n>0$ we derive that $g(u,\bar{x})\leq 0$. Lemma \ref{lem-Minty} allows us to conclude that $$g(\bar{x},u)\geq 0,\;\forall u\in S_f,$$
%i.e., $\bar{x}\in S$. 
establishing the proof. \end{proof}

\subsection{Strong convergence analysis}

%Here, we suppose that the objective bifunction $g$ is $\rho$-strongly monotone, 
In this paragraph, under additional assumption on the monotonicity of the bifunction of the upper level $g$, we ensure the strong convergence of the trajectory in \eqref{algo}. 

\subsubsection{Strong convergence under assumption \eqref{fitz-discret}}

\begin{theo}\label{strong1}
Suppose that the bifunctions $f$ and $g$ are monotone and upper hemicontinuous. Under hypothesis \eqref{fitz-discret}, if the bifunction $g$ is $\rho$-strongly monotone, and if $$0\leq \alpha< \frac{1}{3} \;\text{ and } \;\sum_{n=1}^{\infty}\lambda_n=+\infty, $$ the sequence $\{x_n\}$ generated by algorithm \eqref{algo} strongly converges to a unique solution $u\in S$.
\end{theo}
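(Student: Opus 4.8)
The plan is to exploit the $\rho$-strong monotonicity of $g$ to upgrade the weak convergence argument into a strong one, and to obtain uniqueness of the solution as a by-product. First I would record uniqueness: if $u_1,u_2\in S$, then $g(u_1,u_2)\ge 0$ and $g(u_2,u_1)\ge 0$, so $0\le g(u_1,u_2)+g(u_2,u_1)\le -\rho\|u_1-u_2\|^2$, forcing $u_1=u_2$. So fix the unique $u\in S$ and pick, as in Lemma \ref{lem5}, $p\in N_{S_f}(u)$ with $-p\in A^g(u)$; this time the first-order inequality \eqref{g1} can be sharpened: since $g$ is $\rho$-strongly monotone one has $g(x_{n+1},u)+g(u,x_{n+1})\le -\rho\|x_{n+1}-u\|^2$, and combining this with $-p\in A^g(u)$ gives an extra term $-\rho\lambda_n a_{n+1}$ on the right-hand side of the basic estimate. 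Re-running the computation of Lemma \ref{lem5} verbatim with this improvement yields, for all $n\ge 1$,
\begin{equation*}
a_{n+1}-a_n-\alpha(a_n-a_{n-1})+\lambda_n\beta_n f(u,x_{n+1})+2\rho\lambda_n a_{n+1}\le(\alpha-1)\delta_{n+1}+2\alpha\delta_n+\lambda_n\beta_n\Big[\mathcal{F}_f\Big(u,\tfrac{2p}{\beta_n}\Big)-\sigma_{S_f}\Big(\tfrac{2p}{\beta_n}\Big)\Big],
\end{equation*}
where $a_n=\|x_n-u\|^2$ and $\delta_n=\|x_n-x_{n-1}\|^2$ as before.

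Next I would observe that Corollary \ref{coro-disc} still applies (its hypotheses $0\le\alpha<\tfrac13$ and \eqref{fitz-discret} are in force and its proof only used $\lambda_n\beta_n f(u,x_{n+1})\ge0$, which remains true), so $\sum_n\delta_n<+\infty$, $\sum_n\lambda_n\beta_n f(u,x_{n+1})<+\infty$, and $\{x_n\}$ is bounded. Moreover, exactly as in the proof of Theorem \ref{thm-weak} — wait, here we cannot invoke $\beta_n\to+\infty$; instead note that $\lambda_n\beta_n[\mathcal{F}_f(\cdot)-\sigma_{S_f}(\cdot)]$ is the general term of a convergent series by \eqref{fitz-discret}, and since $f(u,\cdot)$ is convex lsc and $x_{n+1}\in K$ one has $\mathcal F_f(u,2p/\beta_n)-\sigma_{S_f}(2p/\beta_n)\ge \langle 2p/\beta_n,x_{n+1}\rangle+f(x_{n+1},u)-\sigma_{S_f}(2p/\beta_n)\ge -f(u,x_{n+1})$ after using monotonicity of $f$ and $p\in N_{S_f}(u)$; hence the bracket is nonnegative up to the already-summable term $f(u,x_{n+1})$, so in particular $\sum_n\lambda_n\beta_n f(u,x_{n+1})<+\infty$ and the right-hand side above has summable positive part. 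Feeding this into Lemma \ref{lem-sum} applied to $b_n=[a_n-a_{n-1}]_+$ shows $\sum_n[a_n-a_{n-1}]_+<+\infty$, whence $\lim_n a_n$ exists; call its value $\ell\ge 0$. It remains to prove $\ell=0$.

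For that I would sum the sharpened estimate from $n=1$ to $N$. The telescoping part $\sum(a_{n+1}-a_n-\alpha(a_n-a_{n-1}))$ stays bounded (it equals $a_{N+1}-a_1-\alpha(a_N-a_0)+\alpha$-telescoping, all bounded since $\{a_n\}$ converges), $\sum(\alpha-1)\delta_{n+1}+2\alpha\delta_n$ is bounded by $\sum\delta_n<\infty$, the Fitzpatrick series is bounded by \eqref{fitz-discret}, and $\sum\lambda_n\beta_n f(u,x_{n+1})$ is bounded by Corollary \ref{coro-disc}(ii); therefore $\sum_{n=1}^{\infty}\rho\lambda_n a_{n+1}<+\infty$. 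Since $a_{n+1}\to\ell$ and $\sum_n\lambda_n=+\infty$, the only possibility is $\ell=0$, i.e. $\|x_n-u\|\to 0$. This gives strong convergence of $\{x_n\}$ to the unique element $u\in S$, which is the assertion.

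The main obstacle I anticipate is the bookkeeping in the middle step: one must be careful that the term $\lambda_n\beta_n[\mathcal F_f(u,2p/\beta_n)-\sigma_{S_f}(2p/\beta_n)]$ need not be individually nonnegative, so to run Lemma \ref{lem-sum} and to bound the partial sums one genuinely needs to pair it with $\lambda_n\beta_n f(u,x_{n+1})$ and use that the combination is nonnegative (this is exactly the inequality $\mathcal F_f(u,v)\ge \langle v,x\rangle+f(x,u)$ for $x\in K$, i.e. the definition of the Fitzpatrick transform evaluated at $x=x_{n+1}$, together with monotonicity of $f$). Once that pairing is set up correctly, extracting $\sum\lambda_n a_{n+1}<\infty$ and concluding via $\sum\lambda_n=+\infty$ is routine. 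The handling of the inertial telescoping terms is identical to Corollary \ref{coro-disc} and poses no new difficulty.
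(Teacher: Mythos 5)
Your proposal is essentially the paper's own proof: uniqueness from strong monotonicity, the estimate of Lemma \ref{lem5} sharpened by the $\rho$-strong monotonicity of $g$ to produce the extra term $2\rho\lambda_n a_{n+1}$, Corollary \ref{coro-disc} together with the limit-existence argument of Theorem \ref{thm-weak} (which indeed does not need $\beta_n\to+\infty$ or $\liminf\lambda_n>0$), then summation of the sharpened estimate and $\sum_n\lambda_n=+\infty$ to force $\lim_n\|x_n-u\|=0$. The only quibble is your justification of the ``pairing'' inequality, where monotonicity of $f$ gives $f(x_{n+1},u)\le -f(u,x_{n+1})$, i.e.\ the wrong direction; this step is in fact unnecessary, since for $u\in S_f$ one has directly $\mathcal{F}_f\left(u,\frac{2p}{\beta_n}\right)\ge\sup_{y\in S_f}\left\{\left\langle \frac{2p}{\beta_n},y\right\rangle+f(y,u)\right\}\ge\sigma_{S_f}\left(\frac{2p}{\beta_n}\right)$, so the bracket is itself nonnegative and Lemma \ref{lem-sum} applies exactly as in the paper.
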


\begin{proof}
Uniqueness of the solution for $(BEP)$ follows from strong
monotonicity of $g$. For the existence, see \cite[Theorem 4.3]{CCR}.\\
Using inequalities \eqref{g1} and \eqref{15principal}, with $x=u$, by summing up and using  the $\rho$-strong monotonicity of $g$, we get  for $a_n:=\|x_n-u\|^2$
\begin{align*}
a_{n+1}-a_n-\alpha(a_n-a_{n-1})&\leq-2\rho\lambda_n a_{n+1}+(\alpha-1)\|x_{n+1}-x_{n}\|^2+2\alpha\|x_n-x_{n-1}\|^2\\
&\quad+2\lambda_n\beta_n f(x_{n+1},u)+2\lambda_n\langle -p,u-x_{n+1}\rangle.
\end{align*} 
We follow the arguments in the proof of Lemma \ref{lem5} to obtain
\begin{align*}
a_{n+1}-a_n-\alpha(a_n-a_{n-1})+2\rho\lambda_n a_{n+1}&\leq(\alpha-1)\|x_{n+1}-x_{n}\|^2+2\alpha\|x_n-x_{n-1}\|^2 \nonumber\\ 
&\quad+\lambda_n\beta_n\left[\mathcal{F}_f\left(u,\dfrac{2p}{\beta_n}\right)-\sigma_{S_f}\left(\dfrac{2p}{\beta_n}\right)\right].
\end{align*}

Then, by summing up from $n=1$ to $+\infty$, we obtain
\begin{align*}
2\rho\sum_{n=1}^{+\infty} \lambda_n\|x_{n+1}-u\|^2&\leq a_1-\alpha a_0+\sum_{n=1}^{+\infty} \lambda_n\beta_n\left[\mathcal{F}_f\left(u,\dfrac{2p}{\beta_n}\right)-\sigma_{S_f}\left(\dfrac{2p}{\beta_n}\right)\right]\\
&\quad+2\alpha\sum_{n=1}^{+\infty}\|x_n-x_{n-1}\|^2.
\end{align*}  
Using condition \eqref{fitz-discret} and assumption $(i)$ of Corollary \ref{coro-disc}, we deduce that 
\begin{equation*}
\sum_{n=1}^{+\infty} \lambda_n\|x_{n+1}-u\|^2<+\infty.
\end{equation*}
Since $\displaystyle \lim_{n\rightarrow +\infty}\|x_n-u\|$ exists and $\sum_{n=1}^{\infty}\lambda_n=+\infty$, we conclude that $\displaystyle \lim_{n\rightarrow +\infty}\|x_n-u\|=0$, which guarantees the strong convergence of the whole sequence $\{x_n\}$ to $u$.
\end{proof}

%%%%%%%%%%%%%%%%%%%%%%%%%%%%%%%%%%%%%%%%%%%%%%%%%%%%
\subsubsection{Strong convergence without assumption \eqref{fitz-discret}}

We will show that in this case, the algorithm strongly
converges without the need of the geometric hypothesis \eqref{fitz-discret}.

\begin{theo}\label{strong2}
	Suppose that the bifunctions $f$ and $g$ are monotone and upper hemicontinuous with $S_f\neq\emptyset$ and $g$ is $\rho$-strongly monotone. Suppose moreover that \begin{center}
	$0\leq \alpha< \frac{1}{3}$, $\displaystyle \lim_{n\rightarrow +\infty}\lambda_n =0$, $\displaystyle\sum_{n=0}^{+\infty} \lambda_n=+\infty$, $\displaystyle \lim_{n\rightarrow +\infty}\beta_{n}=+\infty$ \; and \; $\displaystyle \liminf_{n\rightarrow +\infty}\lambda_n\beta_n>0.$ 	
	\end{center}
	Then, the sequence $\{ x_n\}$ generated by algorithm \eqref{algo} converges strongly to the unique solution $u$ of $(BEP)$.
\end{theo}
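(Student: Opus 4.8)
The plan is to mimic the structure of the proof of Theorem~\ref{strong1}, but to replace the role played by assumption \eqref{fitz-discret} with the new parameter conditions $\lambda_n\to 0$, $\beta_n\to\infty$ and $\liminf_n\lambda_n\beta_n>0$. First I would record, as in Lemma~\ref{lem5} and the beginning of the proof of Theorem~\ref{strong1}, the basic energy inequality obtained from \eqref{15principal} with $x=u$, combined with \eqref{g1} and the $\rho$-strong monotonicity of $g$, namely
\begin{equation*}
a_{n+1}-a_n-\alpha(a_n-a_{n-1})+2\rho\lambda_n a_{n+1}+\lambda_n\beta_n f(u,x_{n+1})\leq(\alpha-1)\delta_{n+1}+2\alpha\delta_n+\lambda_n\beta_n\Big[\mathcal{F}_f\big(u,\tfrac{2p}{\beta_n}\big)-\sigma_{S_f}\big(\tfrac{2p}{\beta_n}\big)\Big],
\end{equation*}
where $a_n=\|x_n-u\|^2$, $\delta_n=\|x_n-x_{n-1}\|^2$ and $p\in N_{S_f}(u)$ with $-p\in A^g(u)$. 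The key new ingredient is that, since $\mathcal{F}_f$ is convex and lower semicontinuous and $\mathcal{F}_f(u,0)\geq f(u,u)=0$ while $\sigma_{S_f}(0)=0$, one has for fixed $p$ that $\mathcal{F}_f\big(u,\tfrac{2p}{\beta_n}\big)-\sigma_{S_f}\big(\tfrac{2p}{\beta_n}\big)\to \mathcal{F}_f(u,0)-\sigma_{S_f}(0)$ as $\beta_n\to\infty$; more carefully, using $u\in S_f$ and $p\in N_{S_f}(u)$ together with the definition of $\mathcal F_f$, the bracket is in fact nonnegative and tends to $0$, and crucially it is $o(1)$ while being multiplied by $\lambda_n\beta_n$. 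I expect here to need a slightly more quantitative statement: that $\beta_n\big[\mathcal{F}_f(u,\tfrac{2p}{\beta_n})-\sigma_{S_f}(\tfrac{2p}{\beta_n})\big]$ stays bounded (it is bounded by $2\langle p, y\rangle +$ something, uniformly), so that the troublesome term is $O(\lambda_n)$, hence summable against nothing but controlled by $\lim\lambda_n=0$; this is where I would have to be careful, and it is the main obstacle.

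Granting the control of that term, the argument proceeds in three stages. First, one shows boundedness of $\{x_n\}$ and summability $\sum_n\delta_n<\infty$: this follows as in Corollary~\ref{coro-disc}, because the extra term $2\rho\lambda_n a_{n+1}\geq 0$ and $\lambda_n\beta_n f(u,x_{n+1})\geq 0$ only help, and the right-hand side contributions $2\alpha\delta_n$ and the Fitzpatrick bracket term are handled by the telescoping trick with $(1-3\alpha)\delta_n$ on the left and the fact that the bracket term is now $O(\lambda_n)$, hence... — wait, $\sum\lambda_n=+\infty$, so I cannot conclude summability that way. Instead, the correct route is: the bracket term times $\lambda_n\beta_n$ is $O(\lambda_n^2\cdot\beta_n/\lambda_n)$— no; rather, I would use that $\lambda_n\beta_n[\mathcal F_f(u,2p/\beta_n)-\sigma_{S_f}(2p/\beta_n)] = \lambda_n\cdot\beta_n[\cdots]$ and $\beta_n[\cdots]\to 0$, so this is $\lambda_n\cdot o(1)$. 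For boundedness I would use the second-stage device directly: show $\limsup_n a_n\leq 0$ via Lemma~\ref{lem3a}. Concretely, define $\Gamma_n = a_n - \alpha a_{n-1} + (1-\alpha)\delta_n$ or the Lyapunov-type quantity used implicitly in \eqref{C1}, establish $\Gamma_{n+1}+2\rho\lambda_n a_{n+1}\leq \Gamma_n + 2\alpha\delta_n - (1-3\alpha)\delta_n + \lambda_n\varepsilon_n$ with $\varepsilon_n\to 0$, sum, and deduce both $\sum\delta_n<\infty$ (from $(1-3\alpha)>0$, once $\{x_n\}$ is known bounded, which in turn comes from $\Gamma_n$ being bounded below by $(1-\alpha)a_n$ up to lower-order terms) and $\sum_n\lambda_n a_{n+1}<\infty$.

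Second, combining $\sum_n\lambda_n a_{n+1}<\infty$ with $\sum_n\lambda_n=+\infty$ forces $\liminf_n a_n=0$. Third, to upgrade $\liminf$ to $\lim$ I would invoke the two-subsequence device of Lemma~\ref{lem3a}: if $\{a_n\}$ does not converge to $0$, pass to the index sequence $\sigma(n)$ along which $a_{\sigma(n)}<a_{\sigma(n)+1}$; on this sequence the energy inequality gives $2\rho\lambda_{\sigma(n)}a_{\sigma(n)+1}\leq \alpha(a_{\sigma(n)}-a_{\sigma(n)-1})+2\alpha\delta_{\sigma(n)}+(1-\alpha)\delta_{\sigma(n)}-(1-\alpha)\delta_{\sigma(n)+1}+\lambda_{\sigma(n)}\varepsilon_{\sigma(n)}$, and because $a_{\sigma(n)+1}-a_{\sigma(n)}>0$ controls $a_{\sigma(n)}-a_{\sigma(n)-1}$ only after another application... — the clean way is: from $a_{\sigma(n)+1}>a_{\sigma(n)}$ and the inequality one bounds $2\rho\lambda_{\sigma(n)}a_{\sigma(n)+1}$ by terms that are $\delta$'s (summable, hence $\to 0$) plus $\lambda_{\sigma(n)}\varepsilon_{\sigma(n)}$, divide by $\lambda_{\sigma(n)}$ (using $\liminf\lambda_n\beta_n>0$ only where needed to control the Fitzpatrick term, and $\delta_n/\lambda_n$ — hmm, need $\delta_n=o(\lambda_n)$, which I would get from $\delta_n = \|x_n-x_{n-1}\|^2$ and a separate estimate $\|x_{n}-x_{n-1}\|=O(\lambda_{n-1})$ coming from the resolvent definition, since $x_n=J_{\lambda_{n-1}}^{\beta_{n-1}f+g}(y_{n-1})$ moves $y_{n-1}$ by $O(\lambda_{n-1})$ under strong monotonicity of $g$), and conclude $a_{\sigma(n)+1}\to 0$; then $a_n\leq a_{\sigma(n)+1}+o(1)\to 0$. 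Strong convergence of the whole sequence to the unique solution $u$ (uniqueness from $\rho$-strong monotonicity of $g$, existence from \cite[Theorem~4.3]{CCR}) follows. The genuinely delicate points are (a) getting the uniform bound on $\beta_n[\mathcal F_f(u,2p/\beta_n)-\sigma_{S_f}(2p/\beta_n)]$ and its decay, and (b) the estimate $\|x_{n+1}-x_n\|=O(\lambda_n)$ needed to divide the $\sigma(n)$-inequality by $\lambda_{\sigma(n)}$; both use strong monotonicity of $g$ in an essential way, which is exactly why \eqref{fitz-discret} can be dropped here.
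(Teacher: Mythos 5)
There is a genuine gap, and it is exactly the point you flagged as ``the main obstacle'': your proof keeps the Fitzpatrick term $\lambda_n\beta_n\bigl[\mathcal{F}_f\bigl(u,\tfrac{2p}{\beta_n}\bigr)-\sigma_{S_f}\bigl(\tfrac{2p}{\beta_n}\bigr)\bigr]$ in the energy inequality and tries to show it is harmless because $\beta_n\bigl[\mathcal{F}_f\bigl(u,\tfrac{2p}{\beta_n}\bigr)-\sigma_{S_f}\bigl(\tfrac{2p}{\beta_n}\bigr)\bigr]$ would be bounded and tend to $0$. Neither claim is justified, and both fail in general. Lower semicontinuity of $\mathcal{F}_f(u,\cdot)$ only gives a $\liminf$ inequality at $0$, not convergence of the bracket to $0$, and the bracket need not even be finite: in the hierarchical-minimization setting $f(x,y)=\psi(y)-\psi(x)$ with $K=\mathbb{R}^2$, $\psi(x_1,x_2)=\sqrt{x_1^2+x_2^2}-x_2$, one has $S_f=\{0\}\times\mathbb{R}_+$ and $\psi^*(tp)=+\infty$ for every $t>0$ when $p=(1,0)\in N_{S_f}(0)$ (a $p$ that does arise from the optimality condition for a suitable strongly monotone $g$, e.g.\ $g(x,y)=\langle x-(1,0),y-x\rangle$), so by Proposition \ref{fitz-phi} the bracket is identically $+\infty$ while all hypotheses of Theorem \ref{strong2} hold. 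Even when the bracket is finite and $o(1)$, you multiply it by $\lambda_n\beta_n$, which is only bounded \emph{below} (the hypothesis is $\liminf_n\lambda_n\beta_n>0$; e.g.\ $\lambda_n=n^{-1/2}$, $\beta_n=n$ gives $\lambda_n\beta_n\to+\infty$), so ``$o(1)$ times $\lambda_n\beta_n$'' is not $O(\lambda_n)$ and is not summable. Note also that if such a bound were automatic, hypothesis \eqref{fitz-discret} would be essentially redundant in the rest of the paper, which it is not.

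The paper's proof removes \eqref{fitz-discret} by never introducing $p$ or $\mathcal{F}_f$ at all: one fixes $x\in S_f$, observes that monotonicity of $f$ makes $2\lambda_n\beta_n f(x_{n+1},x)\le 0$ so this term is simply discarded from \eqref{15principal}, and keeps only $2\lambda_n g(x_{n+1},x)$, which is then exploited through the $\rho$-strong monotonicity of $g$ and the bound \eqref{14}. Boundedness of $\{x_n\}$ and the convergence $a_n(\bar x)\to 0$ are obtained from the Lyapunov quantity $b_n(x)=a_n(x)-\alpha a_{n-1}(x)+2\alpha\delta_n$ via the two-case analysis (eventually decreasing $b_n$, or the subsequence device of Lemma \ref{lem3a}), and the parameter conditions $\lambda_n\to0$, $\beta_n\to\infty$, $\liminf_n\lambda_n\beta_n>0$ enter only when showing that weak cluster points lie in $S_f$ through \eqref{co}. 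A secondary unjustified step in your plan is the claim $\|x_{n+1}-x_n\|=O(\lambda_n)$ from the resolvent: the operator $\beta_nf+g$ changes with $n$ and $\beta_n\to\infty$, so the resolvent displacement is not $O(\lambda_n)$ uniformly; the paper never needs $\delta_n=o(\lambda_n)$, because in the subsequence case the sign of $(3\alpha-1)\delta_{\sigma(n)+1}$ in \eqref{keyac} already yields $g(x_{\sigma(n)+1},\bar x)\ge 0$ without dividing by $\lambda_{\sigma(n)}$.
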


\begin{proof}
	Under assumptions on the two bifunctions $f$ and $g,$ we get the unique solution denoted by $\bar{x}$ of the
	bilevel equilibrium problem $(BEP)$. \\
	
	\noindent {\textbf{Step 1: We show that $\{ x_n\}$ is bounded.}}\\
	Since $\{ x_n\}$ is generated by algorithm \eqref{algo}, then by \eqref{15principal}, we have for each $x\in K$
	\begin{equation}\label{15}
	\begin{array}{l}
	\|x_{n+1}-x\|^{2}-(1+\alpha)\|x_{n}-x\|^{2}+\alpha\|x_{n-1}-x\|^{2}\\\leq  (\alpha-1)\|x_{n+1}-x_{n}\|^2+2\alpha\|x_n-x_{n-1}\|^2+2\lambda_n \beta_n f(x_{n +1},x)+2\lambda_n g(x_{n +1} ,x).
	\end{array}
	\end{equation}
	Fix $x\in S_f$, and set $a_{n }(x)= \|x_n-x\|^2$ and $\delta_{n }=\|x_n-x_{n-1}\|^2.$  Thanks to the monotonicity of $f$, then for each $n\geq 0$,
	\begin{equation}\label{20}
		\begin{array}{l}
	a_{n+1}(x)-\alpha a_n(x)+2\alpha\delta_{n+1}\\\leq \left(a_n(x)-\alpha a_{n-1}(x)+2\alpha\delta_n\right)+(3\alpha-1)\delta_{n+1} +2\lambda_ng(x_{n+1},x).
	\end{array} \end{equation} 
	By setting $b_n(x)=a_n(x)-\alpha a_{n-1}(x)+2\alpha\delta_n,$ we obtain, for $n\geq 1,$
	\begin{equation}\label{keyac}
	b_{n+1}(x)\leq b_n(x)+(3\alpha-1)\delta_{n+1} +2\lambda_ng(x_{n+1},x).
	\end{equation}
	\begin{itemize}
		\item 
		If there is $n_0 \in \mathbb{N}$ such that $\{b_{n} (x)\}$ is decreasing  for all $n \geq n_0 ,$ then $b_{n} (x)\leq b_{n_0} (x),$ which infers that
		$$a_{n+1}(x)\leq \alpha a_n(x)+b_{n_0}\quad \mbox{for all}\;\; n \geq n_0.$$
		Recursively, we obtain for all $n\geq n_0\geq 1$
		$$a_{n+1}(x)\leq \alpha^{n-n_0} a_{n_0}(x)+b_{n_0}\frac{1-\alpha^{n-n_0}}{1-\alpha},$$
		and the boundedness of the sequence $\{a_n (x)\}$.
		\item  Otherwise there exists an increasing sequence $\{k_n \}$ such that for every $n \geq 0,$ $b_{k_{n +1}} (x)> b_{k _n }(x).$ By Lemma \ref{lem3a}, there exist  a nondecreasing sequence $\{\sigma_n \}$
		and $n_0 > 0$ such that $\displaystyle \lim_{n\rightarrow +\infty} \sigma_n = \infty,$ and for all $n \geq n_0,$  $b_{\sigma_n(x)} < b_{\sigma_{n}+1}(x)$ and $b_n (x) \leq b_{\sigma_{n}+1}(x).$ For $n=\sigma_n$ in \eqref{keyac}, we get
		\begin{equation}
		0<b_{\sigma_{n}+1}(x)- b_{\sigma_{n}}(x)\leq (3\alpha-1)\delta_{\sigma_{n}+1} +2\lambda_{\sigma_{n}}g(x_{\sigma_{n}+1},x).
		\end{equation}
		Using the $\rho$-strong monotonicity of $g$ and relation \eqref{14},  we
		deduce that for $n\geq n_0$
		\begin{equation}
		-2\lambda_{\sigma_{n}}\gamma(x)\sqrt{a_{\sigma_{n}+1}(x)}\leq 2\lambda_{\sigma_{n}}g(x,x_{\sigma_{n}+1})\leq (3\alpha-1)\delta_{\sigma_{n}+1} -2\lambda_{\sigma_{n}}\rho a_{\sigma_{n}+1}(x).
		\end{equation}
		Since $3\alpha-1< 0,$ we conclude that  for $n\geq n_0$
		\begin{equation}
		a_{\sigma_{n}+1}(x)\leq\left( \dfrac{\gamma(x)}{\rho}\right)^{2}\quad \mbox{and } \quad \delta_{\sigma_{n}+1}\leq \dfrac{2\gamma^{2}(x)\lambda_{\sigma_n}}{\rho(1-3\alpha)}.
		\end{equation}
		Hence, $\{a_{\sigma_{n}+1}(x)\}$ is bounded and  since, $\{\lambda_{\sigma_{n}}\}$ is bounded, then  $\{\delta_{\sigma_{n}+1}\}$ is  bounded, which means that $\{b_{\sigma_{n}}(x)\}$  also is  bounded. 
		So, for all $n \geq n_0,$ we have 
		\begin{equation*}
		\begin{array}{lll}
		a_n(x)& \leq & \alpha a_{n-1}+ b_n(x) \\ 
		& \leq  & \alpha a_{n-1}+b_{\sigma_{n}}(x)\\
		&\leq&\alpha a_{n-1}+C \\
		&\leq& \alpha^{n-n_0} a_{n_0}(x)+C\frac{1-\alpha^{n-n_0}}{1-\alpha}.
		\end{array} 
		\end{equation*}
		Therefore the sequence $\{ a_n(x)\}$ is bounded, ensuring the boundedness of $\{x_{n}\}.$
	\end{itemize}
	{\bf Step 2: We show that the sequence $\{ x_n\}$ strongly converges to $\bar{x}$, the unique solution of $(BEP).$}\\ Let us consider two cases:\\
	\underline{Case 1:} There exists $n_0$ such that $\{b_n(\bar{x})\}:=a_n(\bar{x})-\alpha a_{n-1}(\bar{x})+2\alpha\delta_n$ is decreasing for $n \geq n_0$. \\
	Then, the limit of sequence $\{b_n(\bar{x})\}$ exists and $\displaystyle \lim_{n\rightarrow +\infty}(b_n (\bar{x}) - b_{n+1}(\bar{x})) = 0.$ 
	Since $\bar{x}\in S_f,$ then by \eqref{keyac} we have
	\begin{equation}\label{20a}
	b_{n+1}( \bar{x})\leq b_n( \bar{x})+(3\alpha-1)\delta_{n+1} +2\lambda_ng(x_{n+1}, \bar{x}).
	\end{equation} 
	Hence, since $3\alpha-1<0,$ $\displaystyle \lim_{n\rightarrow +\infty}\lambda_n =0$ and $g(\cdot, \bar{x})$ is lower semi-continuous, then
	$\displaystyle \lim_{n\rightarrow +\infty}\delta_{n+1}=0.$\\
	Summing up 
	inequality \eqref{20a} from $1$ to $+\infty,$  we deduce that
	\begin{equation}
		 \sum_{n=1}^{+\infty} -\lambda_ng(x_{n+1},\bar{x})\leq b_1(\bar{x}),
	\end{equation}
	%which, implies that $\displaystyle \liminf_{n\rightarrow \infty}-g(x_{n+1},\bar{x})\leq0$ (because  $\displaystyle \sum_{n=0}^{+\infty} \lambda_n =+\infty$).\\
	which in combination with $\displaystyle \sum_{n=0}^{+\infty} \lambda_n =+\infty$ leads to $\displaystyle \liminf_{n\rightarrow \infty}-g(x_{n+1},\bar{x})\leq0$. \\
	On the other hand, since $g$ is $\rho$-strongly monotone, then we have
	\begin{equation}\label{k}
	\begin{array}{lll}
	\displaystyle \lim_{n\rightarrow +\infty} a_{n+1}(\bar{x})& = & \displaystyle \liminf_{n\rightarrow +\infty}\|x_{n+1}-\bar{x}\|^{2} \\ 
	& \leq & \frac{1}{\rho}\underbrace{\liminf_{n\rightarrow +\infty}-g(x_{n+1},\bar{x})}_{\leq 0}+\frac{1}{\rho}\displaystyle \limsup_{n\rightarrow +\infty}-g(\bar{x},x_{n+1})\\
	&\leq& -\frac{1}{\rho}\displaystyle \liminf_{n\rightarrow +\infty}g(\bar{x},x_{n+1}).
	\end{array} 
	\end{equation}
	Hence, to prove that  the sequence $\{{a_{n+1} (\bar{x})}\}$ converges to zero,  it is enough to verify
	that $\displaystyle \liminf_{n\rightarrow +\infty}g(\bar{x},x_{n+1}) \geq0.$ Since  $\{x_n\}$ is bounded, let $x$ be a weak cluster point of $\{x_n\},$ i.e.
	$x = \displaystyle w - \lim_{n \in I_n\subset \mathbb{N}} x_n.$ By using the weak lower semicontinuity of $g (\bar{x}, \cdot)$ we have
	$$
	g(\bar{x}, x) \leq  \liminf_{n\in I} g (\bar{x}, x_{n+1} ).$$
	Since $\bar{x}$ is the unique solution of $(BEP)$, we need just to check that $ x\in S_f.$  In doing so, by \eqref{14} and \eqref{15principal},  we have for every $y\in K,$ 
	\begin{equation}\label{co}
	f(y,x_{n+1})\leq -\frac{1}{\lambda_n\beta_{n}}\left(b_{n+1}(y)-b_n(y)\right)+ \frac{1}{2\beta_{n}}\gamma(y)\sqrt{a_{n}(x)}. \end{equation}
	We have 
	$$\begin{array}{l}
	b_{n}(y)-b_{n+1}(y)\\= \left(a_n(y)-\alpha a_{n-1}(y)+2\alpha\delta_n\right)-\left(a_{n+1}(y)-\alpha a_{n}(y)+2\alpha\delta_{n+1}\right)  \\ 
	= \left(a_n(y)- a_{n+1}(y)\right)+\alpha \left(a_n(y)- a_{n-1}(y)\right)+2\alpha\left(\delta_n-\delta_{n+1}\right)\\
	=\left(a_{n}(\bar{x})-a_{n+1}(\bar{x})+ 2\langle x_n-x_{n+1}, \bar{x}-y\rangle\right)\\\quad+\alpha\left(a_{n-1}(\bar{x})-a_{n}(\bar{x})+ 2\langle x_n-x_{n-1}, \bar{x}-y\rangle\right)
	+2\alpha\left(\delta_n-\delta_{n+1}\right)\\
	= b_{n}(\bar{x})-b_{n+1}(\bar{x})+2\langle x_n-x_{n+1}, \bar{x}-y\rangle+2\alpha \langle x_n-x_{n-1}, \bar{x}-y\rangle.
	\end{array}$$
	Since $\displaystyle \lim_{n\rightarrow +\infty}(b_n (\bar{x}) - b_{n+1}(\bar{x})) = 0$ and $\displaystyle \lim_{n\rightarrow +\infty}\|x_{n+1}-x_n\|= 0,$ then $\displaystyle \lim_{n\rightarrow +\infty} (b_{n}(y)-b_{n+1}(y))=0.$ \\
	By using the weak lower semicontinuity of $f (y, \cdot)$ and the fact that $\{x_n\}$ is bounded, $\displaystyle \lim_{n\rightarrow +\infty}\lambda_n=0,$ $\displaystyle \liminf_{n\rightarrow +\infty}\lambda_n\beta_n>0$ and $\displaystyle \lim_{n\rightarrow +\infty}\beta_{n}=+\infty,$ we conclude  from \eqref{co} that for every $y\in K$
	$$f (y, x) \leq  \liminf_{n\in I} f(y, x_{n+1} ) \leq 0.$$
	Hence, by using Minty’s lemma we deduce that $x\in S_f.$ \\
	Therefore, $$0 \leq g(\bar{x}, x) \leq  \liminf_{n\in I} g(\bar{x}, x_{n+1} ),$$  and so $\displaystyle \lim_{n\rightarrow +\infty}a_n(\bar{x})=0.$\\
	\underline{Case 2:} There exists a subsequence $\{x_{n_{j}}\}$ of $\{x_{n}\}$ such that $b_{n_{j}}(\bar{x})\leq b_{n_{j}+1}(\bar{x})$ for
	all $j \in \mathbb{N}.$ \\
	By Lemma \ref{lem3a}, 
	the sequence $\sigma(n):= \max\{k \leq n \;:\; b_k < b_{k+1} \}$
	is a nondecreasing, $\displaystyle \lim_{n\rightarrow +\infty} \sigma(n) = \infty$ and, for all $n\geq n_0$
	$$b_{\sigma(n)} < b_{\sigma(n)+1}\quad \mbox{and}\quad b_{n}\leq b_{\sigma(n)+1} .$$
	Let us take $n=\sigma(n)$ and $x=\bar{x}$ in \eqref{keyac}. We have
	\begin{equation}\label{keyad}
	0<b_{\sigma(n)+1}(\bar{x})-b_{\sigma(n)}(\bar{x})\leq  2\lambda_{\sigma(n)}g(x_{\sigma(n)+1},\bar{x}),
	\end{equation} 
	which yields $g(x_{\sigma(n)+1},\bar{x}) \geq 0,$ and thus $\displaystyle \limsup_{n\rightarrow +\infty} g(x_{\sigma(n)+1},\bar{x}) \leq 0.$\\
	Using again the $\rho$-strong monotonicity of $g$ and passing to the limit we have
	\begin{equation}\label{kb}
	\begin{array}{lll}
	\displaystyle \limsup_{n\rightarrow +\infty} a_{\sigma(n)+1}(\bar{x}) 
	& \leq & \frac{1}{\rho}\underbrace{\limsup_{n\rightarrow +\infty}-g(x_{\sigma(n)+1},\bar{x})}_{\leq 0}+\frac{1}{\rho}\displaystyle \limsup_{n\rightarrow +\infty}-g(\bar{x},x_{\sigma(n)+1})\\
	&\leq& -\frac{1}{\rho}\displaystyle \liminf_{n\rightarrow +\infty}g(\bar{x},x_{\sigma(n)+1}).
	\end{array} 
	\end{equation}
	Under the boundedness of $\{x_n\},$ and similarly to the case 1, one can show that 
	$$\displaystyle \liminf_{n\rightarrow +\infty}g(\bar{x}, x_{\sigma(n)+1} ) \geq 0.$$ Hence, by \eqref{k}, we conclude that $\displaystyle \lim_{n\rightarrow +\infty}a_{\sigma(n)+1}(\bar{x})=0.$
	
	Since $b_n(\bar{x}) \leq b_{\sigma(n)+1} (\bar{x})$ for each
	$n \geq n_0 ,$ we derive that
	$$ 
	\begin{array}{lll}
	\displaystyle \lim_{n\rightarrow +\infty} a_n(\bar{x})\leq  \displaystyle \lim_{n\rightarrow +\infty} b_n(\bar{x})&\leq&\displaystyle \lim_{n\rightarrow +\infty}b_{\sigma(n)+1} (\bar{x})\\
	& \leq & \displaystyle\lim_{n\rightarrow +\infty} \left(a_{\sigma(n)}(\bar{x})+2\alpha\delta_{\sigma(n)}\right) \\ 
	& \leq &(1+4\alpha) \displaystyle\lim_{n\rightarrow +\infty} a_{\sigma(n)}(\bar{x})+4\alpha \displaystyle \lim_{n\rightarrow +\infty} a_{\sigma(n)-1}(\bar{x}) \\ 
	& = & 0,
	\end{array}  $$
	thus guaranteeing the  strong convergence of the whole
	sequence $\{x_n \}$ to $\bar{x}.$	
\end{proof}

%%%%%%%%%%%%%%%%%%%%%%%%%%

\section{Application to optimization and saddle point problems}

% % % % % % % % % % % % % % % % % %
In this section, we give two examples of particular bifunctions, for which our main weak and strong convergence theorems apply.
\subsection{Hierarchical minimization}
Our contribution in this paragraph discusses the following hierarchical minimization problem:  
\begin{equation}\label{bil-prog}
\tag{$HMP$}
\min_{x\in  \underset{K}{\hbox{argmin}}\, \psi}\; \varphi(x),
\end{equation}
where $\psi: H\rightarrow \R\cup\{+\infty\}$ is a proper, convex and lower semicontinuous extend real-valued function such that $K=\dom \psi$ is closed and $\varphi:H \rightarrow \mathbb{R}\cup\{+\infty\}$ is a differentiable and lower semicontinuous function such that $K=\dom \varphi $ is closed.
The above problem can be equivalently expressed as:
\vskip 2mm \noindent 
find $\bar{x}\in \underset{K}{\hbox{argmin}}\,\psi$ such that
\begin{equation}\label{pbep}
%\text{find} \; \bar{x}\in \underset{K}{\hbox{argmin}}\, 
\varphi(\bar{x})\leq \varphi(y), \;\; \forall y\in \underset{K}{\hbox{argmin}}\,\psi.
\end{equation}

\noindent Clearly, \eqref{pbep} can be viewed as a bilevel equilibrium  problem $(BEP)$ such that the associated bifunctions are defined for all $x,y\in K$ by $f(x,y)=\psi(y)-\psi(x)$  and $g(x,y)=\varphi(y)-\varphi(x)$.
%We have  two  methods  to  represent  an optimization problem  $\min_K\psi$ by  (EP). The first one is to take $ f(x,y)= \psi (y)-\psi (x) $. The second one is to choose $ f (x, y) =  \psi^\prime  (x, y-x) $,  where $\psi^\prime (x; h):= {\displaystyle\lim_{t\rightarrow 0^+}  } \frac{1}{t}\left( \psi (x+th)-\psi(x)\right)$ is the directional derivative of $\psi$  at  $x$ in the direction $h$.  
In this case the  bifunctions $f$ and $g$ are obviously monotone and upper hemicontinuous.  Hence theorems on weak (resp. strong) convergence apply, whenever (\ref{fitz-discret}) (resp.  (\ref{fitz-discret}) and strong monotonicity) is satisfied.

%\noindent
{\bf - Weak convergence:}
Without any loss of generality we assume $\min_K \psi  = 0$.\\ Set $M= \underset{K}{\hbox{argmin}}\,\psi$, and consider $\overline\psi (x)=\psi (x)$ if $x\in K$, and  $\overline\psi (x)=+\infty$ if $x\notin K$; then $\overline\psi (x)- \delta_{M} (x)\leq 0$ for all $x \in H$. Using the reverse inequality for their Fenchel conjugates, we deduce $\overline\psi^* (p)- \sigma_{M} (p)\geq 0$ for all $p \in H$, and in view of Proposition \ref{fitz-phi}, condition \eqref{fitz-discret} becomes: $\forall u\in M, \;\text{for all}\;  p \in  {N}_{M}(u),$
\begin{equation}\label{acp-assump}
\sum_{n=1}^{+\infty}\lambda_n \beta_{n}\left[\overline\psi ^{*}\left(\frac{2p}{\beta_{n}}\right)-
\sigma_{M}\left(\frac{2p}{\beta_{n}}\right)\right]< +\infty.
\end{equation}
%which is simply the assumption originates from \cite{attouch2011prox}. 
%We refer to  for other variants and concrete examples for which this assumption is verified.\\
Applying Theorem \ref{thm-weak}, and supposing that $M$ is nonempty, $\displaystyle \liminf_{n\rightarrow +\infty} \lambda_n > 0$, $\displaystyle \lim_{n\rightarrow +\infty} \beta_n = +\infty$ and $0\leq \alpha< \frac{1}{3}$, then the whole sequence $\{ x_n\}$ generated by algorithm \eqref{algo} weakly converges to a point $\bar{x}$ solution of $(HMP)$.\\
\noindent Consider the particular case  where $\psi (x)=\dfrac{1}{2}d(x,M)^2$ and $M\subset K$ is a nonempty closed convex set satisfying $d(x,M)=\displaystyle \inf_{y\in M}\| x-y\|$. Then,  $\overline\psi^* (p)- \sigma_{M} (p)= \dfrac{1}{2}\| p\|^2$ for all $p \in H$. Here, $M$ is the minimum set of $\psi$, and then 
%\begin{center}
condition (\ref{fitz-discret}) is equivalent to $\displaystyle \sum_{n=1}^{+\infty}\dfrac{\lambda_n}{\beta_{n}}  < +\infty$.
%\end{center}

\begin{rem}
We note that the condition \eqref{acp-assump} is simply the assumption originates from \cite{attouch2011prox} in the framework of solving a variational inequality of the forme $$Ax+N_C(x)\ni 0,$$
where $A: H\rightrightarrows H$ is a maximally monotone operator and $C\subset H$ is a closed convex set. For this problem, the authors in \cite{attouch2011prox} obtained solutions by means of the convergence analysis of the trajectories of the following prox-penalization algorithm 
$$x_n=(I+\lambda_n(A+\beta_n\partial \psi))^{-1}x_{n-1},$$
where $\lbrace \beta_n \rbrace$ and $\lbrace \lambda_n \rbrace$ are two sequences of nonnegative reals and $\psi:H\rightarrow \R \cup \{+\infty\}$ acts as an external penalization function with respect to the constraint $x\in C$. Indeed, several ergodic and non ergodic convergence results have been justified for $\{x_n\}$ under the key assumption: $\;\text{for all}\;   p\in R(N_C)$,
\begin{equation*}
\sum_{n=1}^{+\infty}\lambda_n \beta_{n}\left[\psi ^{*}\left(\frac{p}{\beta_{n}}\right)-
\sigma_{C}\left(\frac{p}{\beta_{n}}\right)\right]< +\infty,
\end{equation*}
where $R(N_C)$ denotes the range of $N_C$.
\end{rem} 

{\bf - Strong convergence:} 
To deduce the strong convergence of the algorithm $(IPA)$ to a solution of $(HMP)$, we'll have to add a strong monotonicity condition on the function $g$. However, when we set $g(x,y)=\varphi (y)-\varphi (x)$, the strong monotonicity of $g$ is not assured, so that we take  $g(x,y)=\langle \nabla \varphi(x),y-x \rangle$, where $\nabla \varphi$ is the gradient of $\varphi$
(we identify $\varphi$ with $\overline\varphi (x) = \varphi (x) $ if $x\in K$, and $\overline\varphi (x) = +\infty $ if $x\notin K$).
In this case our inertial proximal scheme associated to the problem \eqref{pbep} is the following: $y_n:=x_n+\alpha(x_n-x_{n-1})$ and $x_{n+1}\in K$ {such that} 
\begin{equation}\label{syst-bil-prog}
\beta_n (\psi(y)-\psi(x_{n+1}))+\langle \nabla \varphi(x_{n+1}),y-x_{n+1} \rangle+\dfrac{1}{\lambda_n}\langle x_{n+1}-y_n, y-x_{n+1}\rangle \geq 0, \;\;\forall y\in K.
\end{equation}
Moreover, if we suppose $\varphi$ to be strongly convex on $K$, i.e., for some $\kappa>0$ and for all $x,y\in K$ and all $t\in [0,1]$ 
\begin{equation*}
\varphi (tx+(1-t)y)\leq t\varphi (x) + (1-t)\varphi (y) - \kappa t(1-t)\| x-y\|^2,
\end{equation*}
we deduce that $g$ is strongly monotone, and thus the conclusion of Theorem \ref{strong1} is valid whenever $\displaystyle \sum_{n=1}^{\infty}\lambda_n=+\infty$ and $0\leq \alpha< \frac{1}{3}$.\\
\vskip 4mm

%%%%%%%%%%%%%%%%%%%%%%%%%%%%%%%%%%%%%%%%%%%%%

\subsection{Equilibrium problem under a saddle point constraint}

Let $H_1, H_2$ be two real Hilbert spaces, $U\subset H_1$ and $V\subset H_2$ be nonempty closed convex sets, and let $L: U\times V \rightarrow \mathbb{R}$ be closed and convex-concave, i.e., for each $(u,v)\in U\times V$ the real
functions $L(.,v)$ and $-L(u,.)$ are convex and lower semicontinuous.

We consider the saddle-point problem: find $\;(\bar{u},\bar{v})\in U\times V \;$ such that
\begin{equation}\label{sp}
\tag{$SP$}
L(\bar{u},v)\leq L(\bar{u},\bar{v})\leq L(u,\bar{v}) \;\text{for every}\; (u,v)\in U\times V,  
\end{equation}
which is equivalent, see \cite{Ek-Tem}, to
$$
\displaystyle \max_{v\in V}\inf_{u\in U}L(u,v)=\min_{u\in U}\sup_{v\in V}L(u,v)=L(\bar{u},\bar{v}).
$$
Setting $H=H_1\times H_2$, $K=U\times V$, we define the bifunction $f: K\times K\rightarrow \mathbb{R}$ as:
\begin{equation*}\label{sp1}
f((u_1,v_1),(u_2,v_2)):=L(u_2,v_1)-L(u_1,v_2), \text{ for each }(u_1,v_1),(u_2,v_2)\in K.
\end{equation*}
Let us observe that problems $(SP)$ and $(EP)$ are equivalent and we denote the solution set of $(SP)$ by $S_{L}$. \\

Using the definition of the  Fitzpatrick transform $\mathcal{F}_f$, we have \\for all $(u_1,v_1),(u_2,v_2)\in K$:
\begin{align*}
\mathcal{F}_f((u_1,v_1),(u_2,v_2))&= \sup_{(x,y)\in K} \lbrace \langle u_2,x \rangle+\langle v_2,y \rangle+f((x,y),(u_1,v_1))\rbrace\\
&=\sup_{(x,y)\in K} \lbrace \langle v_2,y \rangle+L(u_1,y)-L(x,v_1)+\langle u_2,x \rangle\rbrace\\
&= \sup_{y\in V}\lbrace \langle v_2,y \rangle-(-L((u_1,y))\rbrace+\sup_{x\in U}\lbrace \langle u_2,x \rangle-L(x,v_1)\rbrace \\
&= (-L(u_1,.))^*(v_2)+(L(.,v_1))^*(u_2).
\end{align*}
Therefore the condition \eqref{fitz-discret} is satisfied when for all pairs $(u,v)\in S_f$ and $(p,q)\in N_{S_f}(u,v)$,
\begin{equation}\label{sig6}
%\tag{$\mathcal{C}_5$}
\sum_{n=1}^{+\infty} \lambda_n\beta_n\left[(-L(u,.))^*\left(\frac{2q}{\beta_n}\right)+(L(.,v))^*\left(\frac{2p}{\beta_n}\right)-\sigma_{S_f}\left(\frac{2p}{\beta_n},\frac{2q}{\beta_n}\right)\right]<+\infty.
\end{equation}
We consider two single-valued monotone operators $A$ and $B$ such that $K\subset\dom A\times \dom B$ and $A\times B+N_{S_L}$ is a maximally monotone operator (see \cite{roc,roc1}). Furthermore we suppose that  the solution set  $S_{VL}$ of $0\in A\bar x\times B\bar y+N_{S_L}(\bar x,\bar y)$ is nonempty. By $A\times B$, we mean the operator defined for $(u,v)\in H=H_1\times H_2$ by $(A\times B)(u,v)=Au\times Bv$. When the monotone
operator $A\times B+N_{S_L}$ is maximally monotone, then 
$$(\bar x,\bar y)\in  S_{VL} \quad \iff\quad  
	(\bar x,\bar y)\in  S_{L}\hbox{ and }\langle A\bar x, u-\bar x\rangle+\langle B\bar y, v-\bar y\rangle\geq 0, \;\forall (u,v)\in S_L.
	$$
For each $(u_1,v_1),(u_2,v_2)\in K$, let us set 
	$$g((u_1,v_1),(u_2,v_2)):=\langle Au_1, u_2-u_1\rangle+\langle Bv_1, v_2-v_1\rangle.$$
Then, our inertial proximal algorithm $(IPA)$ used for approaching a solution to the problem $(BEP)$ associated with the above bifunctions $f$ and $g$, i.e., for finding a solution in $S_{VL}$, takes the following form: for every $n\geq 1$,  given current iterates $(x_{n-1}^i, x_n^i)\in K$, $i=1, 2$, set $y_n^i=x_n^i+\alpha (x_n^i-x_{n-1}^i)$ and define $(x_{n+1}^1,x_{n+1}^2)\in K$ in this way:
\begin{equation}\label{EDSP}
%\tag{$EDSP$}  
\left\{ \begin{array}{l}\text{ for all } (u,v)\in U\times V, \\
\frac{1}{\lambda_n}\langle (x_{n+1}^1,x_{n+1}^2)-(y_n^1,y_n^2), (u,v)-(x_{n+1}^1,x_{n+1}^2)\rangle+\langle Ax_{n+1}^1, u-x_{n+1}^1\rangle\\ 
\hfill +\langle Bx_{n+1}^2, v-x_{n+1}^2\rangle+ \beta_n(L(u,x_{n+1}^2)-L(x_{n+1}^1,v)) \geq 0.
\end{array}\right.
\end{equation}
In this case,  Theorems \ref{thm-weak} and \ref{strong1} can be summarized as follows:
%%%%%%%%%%%%%%%%%
\begin{coro}\label{corsp} 
	Let $\{x_n^1,x_n^2\}$ be the sequence generated by \eqref{EDSP}. Under hypothesis \eqref{sig6} and whenever $0\leq \alpha< \frac{1}{3}$, $\displaystyle \liminf_{n\rightarrow +\infty}\lambda_n>0$ and  $\{\beta_n\} \rightarrow +\infty$, the
	weak convergence of $\{x_n^1,x_n^2\}$ to a solution of $S_{VL}$ is ensured. Also, the strong convergence of $\{x_n^1,x_n^2\}$ to the unique element of $S_{VL}$ is ensured when $0\leq \alpha< \frac{1}{3}$, $\displaystyle \sum_{n=1}^{\infty}\lambda_n=+\infty$ and  
	$A\times B$ is  strongly monotone on $K$.
\end{coro}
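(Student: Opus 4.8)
The plan is to recognise that the scheme \eqref{EDSP} is nothing but the generic iteration \eqref{algo} run at the particular pair of bifunctions $f((u_1,v_1),(u_2,v_2))=L(u_2,v_1)-L(u_1,v_2)$ and $g((u_1,v_1),(u_2,v_2))=\langle Au_1,u_2-u_1\rangle+\langle Bv_1,v_2-v_1\rangle$, and then to invoke Theorems~\ref{thm-weak} and \ref{strong1}. Indeed, writing $x_{n+1}=(x^1_{n+1},x^2_{n+1})$ and $y_n=(y^1_n,y^2_n)$ and splitting the inner product of $H=H_1\times H_2$ coordinatewise turns \eqref{algo} into exactly \eqref{EDSP}. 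Thus the proof reduces to three checks: that $f$ and $g$ satisfy the blanket hypotheses in force throughout the paper, that the solution set $S$ of this $(BEP)$ equals $S_{VL}$, and that \eqref{sig6} is precisely \eqref{fitz-discret} for this $f$.

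For the blanket hypotheses: $f$ is the classical saddle bifunction of $L$, hence monotone, and upper hemicontinuous because $L(\cdot,v)$ is convex lower semicontinuous and $-L(u,\cdot)$ is convex lower semicontinuous; $g$ satisfies $g(x,y)+g(y,x)=-\langle(A\times B)x-(A\times B)y,\,x-y\rangle\le 0$ by monotonicity of $A$ and $B$, so $g$ is monotone, and it is upper hemicontinuous by the continuity properties of $A$ and $B$. Moreover $\partial g_{(u,v)}(u,v)=(Au,Bv)+N_K(u,v)$, which is nonempty exactly when $(u,v)\in K$, so $\dom(A^g)=K$. The nonemptiness of $S_{VL}$ forces $S_L=S_f$ and $K\cap S_f$ to be nonempty, and the maximal monotonicity of $A\times B+N_{S_L}$ together with the standing requirement on $\R_+(K-S_f)$ supplies the remaining structure needed by Theorems~\ref{thm-weak} and \ref{strong1}.

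Next, since $(SP)$ and the equilibrium problem for $f$ are equivalent, $S_f=S_L$; hence $\bar x=(\bar u,\bar v)\in S$ means $(\bar u,\bar v)\in S_L$ and $g((\bar u,\bar v),(u,v))\ge 0$ for all $(u,v)\in S_L$, that is, $\langle A\bar u,u-\bar u\rangle+\langle B\bar v,v-\bar v\rangle\ge 0$ for all $(u,v)\in S_L$, which by the displayed characterisation of $S_{VL}$ — valid precisely because $A\times B+N_{S_L}$ is maximally monotone — means exactly $(\bar u,\bar v)\in S_{VL}$. Therefore $S=S_{VL}$, nonempty by hypothesis. For the geometric condition, the computation of $\mathcal{F}_f$ carried out above gives $\mathcal{F}_f((u,v),(p,q))=(-L(u,\cdot))^*(q)+(L(\cdot,v))^*(p)$, so for $(u,v)\in S_f$ and $(p,q)\in N_{S_f}(u,v)$ the series appearing in \eqref{fitz-discret} evaluated at $\bigl(\tfrac{2p}{\beta_n},\tfrac{2q}{\beta_n}\bigr)$ coincides termwise with the one in \eqref{sig6}; hence \eqref{fitz-discret}$\iff$\eqref{sig6}.

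With these identifications, Theorem~\ref{thm-weak} yields the weak convergence of $\{(x^1_n,x^2_n)\}$ to a point of $S=S_{VL}$ under $0\le\alpha<\tfrac13$, $\liminf_n\lambda_n>0$ and $\beta_n\to+\infty$. For the strong statement, if $A\times B$ is $\rho$-strongly monotone on $K$ then $g(x,y)+g(y,x)=-\langle(A\times B)x-(A\times B)y,\,x-y\rangle\le-\rho\|x-y\|^2$, i.e. $g$ is $\rho$-strongly monotone as a bifunction, so Theorem~\ref{strong1} gives strong convergence to the unique point of $S_{VL}$ under $0\le\alpha<\tfrac13$ and $\sum_n\lambda_n=+\infty$. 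The one genuinely delicate point is the equality $S=S_{VL}$: it hinges on interchanging the normal-cone inclusion defining $S_{VL}$ with the variational inequality over $S_L$ (which is exactly the maximal monotonicity of $A\times B+N_{S_L}$) and on reading the upper-level equilibrium inequality for $g$ as that variational inequality; everything else is a transcription of the hypotheses of Theorems~\ref{thm-weak} and \ref{strong1}.
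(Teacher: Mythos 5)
Your proposal is correct and follows essentially the same route as the paper, which states Corollary~\ref{corsp} without a separate proof precisely as the specialization of Theorems~\ref{thm-weak} and \ref{strong1} to the saddle bifunction $f$ and the bifunction $g$ built from $A\times B$, after computing $\mathcal{F}_f$ (so that \eqref{sig6} is \eqref{fitz-discret}) and identifying $S$ with $S_{VL}$ via the displayed variational characterization. Your additional checks (monotonicity and upper hemicontinuity of $f$ and $g$, $\dom(A^g)=K$, transfer of strong monotonicity from $A\times B$ to $g$) only make explicit what the paper leaves implicit under its standing assumptions.
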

%We note that this result extends Theorem 2.1 and Theorem 2.2 in \cite{AC}, from monotone variational inequalities over the set   of solutions for convex minimization problem, to variational problems constrained by the saddle points of a convex-convave function.

Next, let us give an example where the condition \eqref{sig6} is verified. %for a non bilinear form.
%%%%%%%%%%%%%%%%%%%%%%%%%%%%%%%%%%%%%%
\begin{expl}
	Take $K=[0,1]\times [0,1]$ and $L$ the closed convex-concave function  defined on $K$ by $L(u,v)=u^2(1+v)$.
	Then, the set of saddle points of $L$, which is also the solution set $S_f$, is $S_f=\lbrace 0 \rbrace \times [0,1]$.\\ We also have
	\begin{align*}
	(p,q)\in N_{(\lbrace 0 \rbrace \times [0,1])}(0,v)&\Leftrightarrow (p,q)(s,t-v)\leq 0, \;\;\forall\;(s,t)\in \lbrace 0 \rbrace \times [0,1] \\
	&\Leftrightarrow q(t-v)\leq0,\;\;\forall t\in [0,1];
	\end{align*}
	and then 
	\begin{center}
		$N_{S_f}(0,0)=\mathbb{R}\times \mathbb{R}_-$, $N_{S_f}(0,1)=\mathbb{R}\times \mathbb{R}_+$ and 
		$N_{S_f}(0,v)=\mathbb{R}\times \lbrace 0 \rbrace$ for every $ v\in ]0,1[
		$.
	\end{center}
	To ensure \eqref{sig6}, we check
	\begin{center}$
		\begin{array}{rl} 
		\displaystyle\sigma_{S_f}\left(\frac{2p}{\beta_n},\frac{2q}{\beta_n}\right)&= \displaystyle\sup_{v\in  [0,1]} \left \lbrace v\frac{2q}{\beta_n}\right \rbrace = \left\{\begin{array}{cl} \displaystyle\frac{2q}{\beta_n} & \text{ if }  q>0\\
		0 & \text{ if }   q\leq 0,    
		\end{array}\right.
		\end{array}
		$ \end{center}
	
	\begin{center}$
		\begin{array} {rl}
		(-L(0,.))^*\left(\frac{2q}{\beta_n}\right)&=\displaystyle\sup_{0\leq s\leq 1}\left\lbrace \frac{2q}{\beta(t)}s\right\rbrace 
		=\left\{\begin{array}{cl} \displaystyle\frac{2q}{\beta_n} & \text{ if }  q>0\\
		0 & \text{ if }   q\leq 0,
		\end{array}\right.
		\end{array}
		$ \end{center}
	and 
	\begin{center}$
		\begin{array} {rl}
		(L(.,v))^*\left(\frac{2p}{\beta_n}\right)&=\displaystyle\sup_{0\leq s\leq 1}\left\lbrace \frac{2p}{\beta_n}s-(1+v)s^2\right\rbrace  
		=\frac{p^2}{(1+v)\beta_n^2}.
		\end{array}
		$\end{center}
	Thus
	\begin{center}$\begin{array}{l}
		\displaystyle \sum_{n=1}^{+\infty} \lambda_n\beta_n\left[(-L(u,.))^*\left(\frac{2q}{\beta_n}\right)+(L(.,v))^*\left(\frac{2p}{\beta_n}\right)-\sigma_{S_f}\left(\frac{2p}{\beta_n},\frac{2q}{\beta_n}\right)\right]
		\\= \frac{p^2}{(1+v)}{\displaystyle \sum_{n=1}^{+\infty} }\dfrac{\lambda_n}{\beta_n},
		\end{array}$
	\end{center}
	and then
	\eqref{sig6} is satisfied, whenever $\displaystyle \sum_{n=1}^{+\infty} \frac{\lambda_n}{\beta_n} 
	<+\infty$.
\end{expl}

\section{Numerical experiment}

In this section, we present a numerical experiment to illustrate the convergence of the proposed algorithm.
Let us consider the constrained  minimization problem $(HMP)$, with $$ K = \mathbb{R}^{2}, \quad \varphi(x)=\frac{1}{4}(x_1-x_2-2)^{2}\quad \text{ and } \quad \psi(x)=\frac{1}{4}\left(x_1+x_2-4\right)^{2}
.$$ Since $ \psi $ is convex, the minimum set of $\psi$ is $S_\psi =\nabla\psi^{-1} (0,0)=\{ x=(x_1,x_2)\in \mathbb{R}^{2}: x_2=4-x_1\}$ and the solution set of the hierarchical problem $\displaystyle \min_{S_\psi} \varphi$ is $S = \{\bar x\}=\{(3, 1)\}$. 
%with $\varphi (\overline x) = 14 $.\\

%To check condition \eqref{fitz-discret}, 

\noindent We evaluate  $ \frac{1}{2} d(x, S_\psi ) ^ 2 $ where $d (x, S_\psi )=\displaystyle \inf_{y\in S_\psi }\|y-x\|_2$ and $\|(x_1,x_2)\|_2:=\sqrt{x_1^2+x_2^2}$. For $ x = (x_1, x_2) \in \mathbb R ^ 2 $, we have
\begin{center}$
	d (x, S_\psi ) ^ 2 = \displaystyle \inf_{y_1 \in \mathbb R} \left((y_1-x_1)^2 + (y_1 + x_2-4)^2 \right) = \displaystyle \inf_{y_1 \in \mathbb R} \alpha (y_1).
	$ \end{center}
Since $\alpha (t)=(t-x_1)^2 + (t + x_2-4)^2$ is strongly convex and 
\begin{center} 
	$\alpha^\prime (\bar {y}_1) = 2 (2 \bar {y}_1 + x_2-4-x_1) = 0 \Leftrightarrow \bar{y}_1 = \frac {1} {2} (x_1-x_2 + 4)$,  \end{center}
we get 
\begin{center}
	$
	d (x, S_\psi) ^ 2 = \alpha (\bar{y}_1) = (\bar{y}_1-x_1)^2 + (\bar{y}_1 + x_2-4 )^2 = 2 \psi (x),
	$ \end{center}
which yields $ \psi (x) = \frac{1}{2} d (x, S_\psi)^2 $. Thus condition \eqref{fitz-discret} is equivalent to $\displaystyle\sum_{n=1}^{+\infty}\dfrac{\lambda_n}{\beta_{n}}  < +\infty.$\\
%The associated bifunctions are defined for all $x,y\in K$ by $f(x,y)=\psi(y)-\psi(x)$  and $g(x,y)=\varphi(y)-\varphi(x).$ Let us take $F_n(x,y)= g(x,y)+\beta_nf(x,y),$ since the proximal operator is the resolvent of $F_\beta(x,y)=F,$ which is also a resolvent of the gradient
%of $\varphi+\beta_n\psi. $ 
%Then for $\lambda > 0$ and $x \in K$  we have
%$$ F_{\lambda}^{F_n} (x) = prox_{\lambda}^{\varphi+\beta_n\psi} (x)= prox_{\lambda}^{\theta_n\varphi}\left(\theta_nx+4(1-\theta_n)\right)=\qquad \mbox{with}\quad \theta_n=\frac{1}{1+\frac{\beta_{n}}{2}} $$
%We have $$prox_{\lambda}^{\theta_n\varphi}\left(\theta_nx+4(1-\theta_n)\right)=\left(I+\lambda_n\theta_n B\right)^{-1}\left(\theta_nx+4(1-\theta_n)\right)$$

Note that the associated bifunctions are defined for all $x,y\in K$ by $f(x,y)=\psi(y)-\psi(x)$ and $g(x,y)=\varphi(y)-\varphi(x)$, that $f$ and $g$ are monotone and that weak and strong convergences coincide in finite dimension. 
%, so that we only need to verify strong convexity of 
%$\varphi$, which is assured since its Hessian is positive.\\
%and $\bar{x} = (3,1)$ is the unique solution of problem \eqref{bil-prog}.

%Note that the objective function $ \varphi $ is strongly convex, since its Hessian is positive, because the corresponding  two eigenvalues $ 3 + \sqrt 2$ and $ 3- \sqrt 2 $ are positive. So it is more convenient to take $ g (x, y) = \langle \nabla \varphi (x), y-x \rangle $ in order to get the strong monotonicity of $ g $. In this case, when $\displaystyle\int_0^{+\infty} \dfrac{dt}{\beta(t)}<+\infty$, we use Theorem \ref{theo2} to conclude the convergence of $ x (t) $ to the unique solution $ \bar{x} = (1,3) $ of $\left( {\mathcal M}\right)$.\\

By using the proximal operator of $\varphi+\beta_n\psi,$ the drawing in Figure 1 displays the asymptotic behavior of the trajectories
$x_n = ( y_n,z_n )$  from the initial values $(y_0 , z_0 )=(0,0.5)$ and  $(y_1 , z_1 )=(0,0.5)$ with $\alpha = 0.1, \lambda_n=\frac{1}{n}$ and different values of $\beta_n.$ We also use the iterate error $\|x_n-\bar{x}\|_2$ as a measure to describe the computational performance of our algorithm. The numerical results in Figure 2 illustrate the rate of convergence of $\|x_n-\bar{x}\|_2$  for different choices of $\beta_n$ and $\alpha = 0.1,$ while Figure 3 displays the convergence rate of $\|x_n-\bar{x}\|_2$ for different choices of $\alpha$ and $\beta_n=(1+n).$

\begin{figure}[H]
	\centering
	\label{fig:a}\includegraphics[scale=0.5]{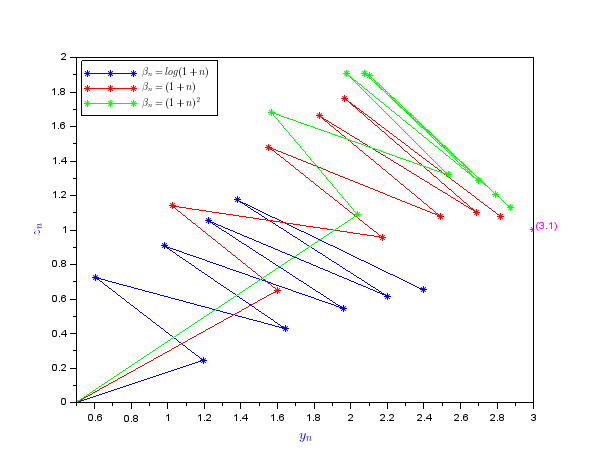}
	\caption{The asymptotic behavior of the trajectories
		$x_n = ( y_n,z_n )$.}
\end{figure}

\begin{figure}[H]
	\centering
	\label{fig:b}\includegraphics[scale=0.5]{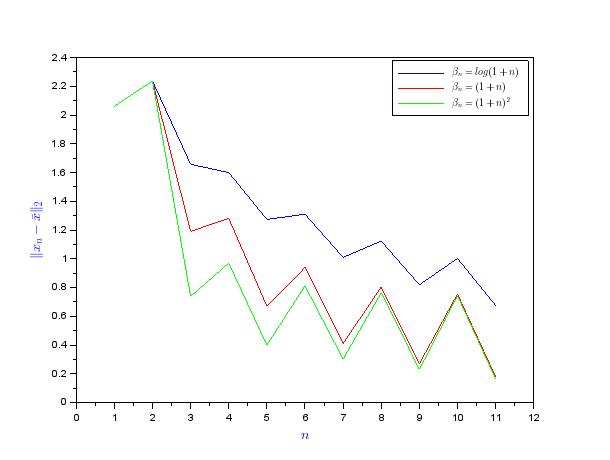}
	\caption{The rate of convergence of $\|x_n-\bar{x}\|_2$  for  $\alpha = 0.1$.}
\end{figure}

\begin{figure}[H]
	\centering
	\label{fig:c}\includegraphics[scale=0.5]{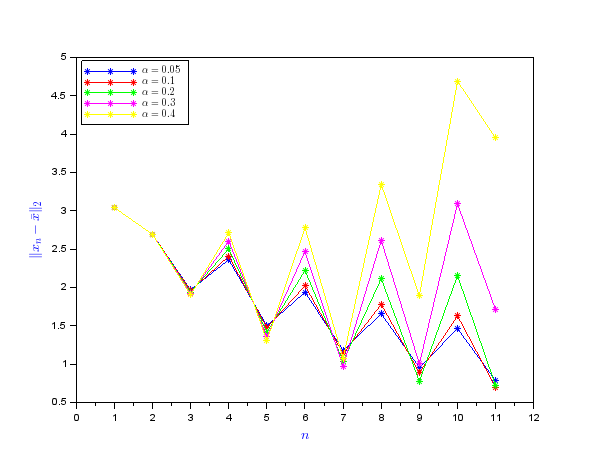}
	\caption{The convergence rate of $\|x_n-\bar{x}\|_2$ for  $\beta_n=(1+n).$}
\end{figure}

We note in Figure 2, that when $\beta_n $ increases then the rate of convergence of  $\|x_n-\bar{x}\|_2$ rapidly increases to $0,$ while in Figure 3, the constant coefficient $\alpha$ acts inversely on the speed of convergence of $\|x_n-\bar{x}\|_2,$ (the convergence gets worst as the values of $\alpha$ exceed $\frac{1}{3}$), which confirms the importance of taking $\alpha <\frac{1}{3}$ in our theoretical results. \\

We note that all codes in this digital test are written in SCILAB-6.1.

%%%%%%%%%%%%%%%%%%%%%%%%%%%%%%%%%%%%%%%%%%%%%%%%%%%%%%%%%%%%%%%%%%%%%%%%%%%%%%%%%%%%%
\section{Concluding Remark}
In this paper, we presented an inertial proximal \\
method for solving bilevel  monotone equilibrium problems in Hilbert spaces. Our analysis shows the weak and  the strong convergence of the trajectory generated by the algorithm under natural assumptions.
Our results can be seen as an extension and improvement of some known results in the literature. 
In particular, the geometric assumption \eqref{fitz-discret} shows that, as conjectured in \cite{moud2}, the restrictive assumption $\|x_{n+1}-x_n\|=o(\epsilon_n)$ may be removed via the introduction of a  notion of conditioning for equilibrium bifunctions.
%which is here the equilibrium Fitzpatrick transform $\mathcal{F}_f$ associated with the lower level equilibrium bifunction $f$. 
We illustrate this assumption with two concrete particular cases
and conclude this work by a numerical experiment,
%to hierarchical minimization problems 
which shows that, with a suitable choice of the parameters, the convergence conditions are satisfied and the proposed iterative method succeeds in approximating a solution to bilevel equilibrium problems. 

Finally, we note that, to the best of our knowledge, our approach seems to be the first introduced inertial proximal scheme for solving $(BEP)$ and then
several extensions of our main results may be analyzed. In particular, 
an interesting direction of future research will be to obtain the above weak convergence result without condition \eqref{fitz-discret} and also to develop new splitting inertial proximal algorithms for solving bilevel equilibrium problems. \\

%\paragraph{\textbf{Acknowledgments}:} The research of A\"icha Balhag was supported by  the EIPHI Graduate School (contract ANR-17-EURE-0002). Research of Michel Th\'era benefited from the support of the FMJH Program PGMO and from the support of EDF.
%\bibliographystyle{plain}
\bibliographystyle{siamplain}
\bibliography{Omega}
\end{document}